\theoremstyle{plain}
\newtheorem{thm}{Theorem}[section] 
\newtheorem{cor}[thm]{Corollary}
\newtheorem{lem}[thm]{Lemma}
\newtheorem{prop-defi}[thm]{Definition \& Proposition}
\newtheorem{prop}[thm]{Proposition}
\newtheorem*{thm*}{Theorem}
\newtheorem*{prop*}{Proposition}
\newtheorem*{cor*}{Corollary}
\theoremstyle{definition}
\newtheorem{defi}[thm]{Definition}
\newtheorem{rem}[thm]{Remark}
\newtheorem{claim}{Claim}
\newtheorem*{claim*}{Claim}
\newcommand{\NN}{{\mathbb N}}
\newcommand{\RR}{{\mathbb R}}
\newcommand{\D}{\mathcal{D}}
\newcommand{\partafl}[2]{\frac{\partial {#1}}{\partial {#2}}}
\newcommand{\varps}{{\varepsilon}}
\newcommand{\rg}{{\operatorname{rg\hspace{0.04cm}}}}
\newcommand{\tens}{\otimes}
\newcommand{\To}{\longrightarrow}
\newcommand{\Hom}{\operatorname{Hom}}
\newcommand{\id}{\operatorname{id}}
\newcommand{\Ad}{\operatorname{Ad}}
\newcommand{\g}{\mathfrak g}
\renewcommand{\leq}{\leqslant}
\renewcommand{\geq}{\geqslant}
\newcommand{\im}{{\operatorname{im}}}
 \newcommand{\mfg}{{\mathfrak{g}}}
\newcommand{\mfh}{{\mathfrak{h}}}
\newcommand{\mfk}{{\mathfrak{k}}}
\newcommand{\Ug}{{\mathcal{U}(\mathfrak{g})}}
\newcommand{\Uh}{{\mathcal{U}(\mathfrak{h})}}
\newcommand{\Cohom}{{\operatorname{H}}}
\newcommand{\Coind}{{\operatorname{Coind}}}
\newcommand{\End}{\operatorname{End}}
\newcommand{\Vect}{\operatorname{Vect}}
\newcommand{\ad}{\operatorname{ad}}
\renewcommand{\restriction}{\rvert}
\title{Topologizing Lie algebra cohomology}
\author{David Kyed}
\address{David Kyed, Department of Mathematics and Computer Science, University of Southern Denmark, Campusvej 55, DK-5230 Odense M, Denmark}
\email{dkyed@imada.sdu.dk}
\subjclass[2010]{22E41, 17B56}
\begin{document}

\begin{abstract}
We show that the theory of  Lie algebra cohomology can be recast in a topological setting and that classical results, such as the Shapiro lemma and the van Est isomorphism, carry over to this augmented context.  
\end{abstract}
\maketitle

\section{Introduction}

The theory of cohomology for groups and Lie algebras dates  back to the pioneering works of, among others,  Cartan, Chevalley, Eilenberg, Kozul and Mac Lane  \cite{CE, chevalley-eilenberg, koszul},  and is by now an indispensable tool in a variety of different branches of mathematics. In recent years, there has been an increasing interest in the topological aspects of group cohomology, since it turns out that there are many instances where one does not have vanishing of the group cohomology on the nose, but only  of the \emph{reduced} cohomology (see e.g.~\cite{ shalom-rigidity, bader-furman-sauer, tessera-vanishing} for examples of this phenomenon).
 When the group $G$ in question is a connected  Lie group and the coefficient module is smooth, the van Est theorem provides an (a priori algebraic) isomorphism between the cohomology of $G$  and the (relative) Lie algebra cohomology of its Lie algebra, and keeping in mind the abundance of results involving \emph{reduced} group cohomology, it is natural to ask if also the Lie algebra cohomology carries a canonical topology and, if so, whether or not the van Est isomorphism is actually a homeomorphism. Both questions are answered affirmatively in Section \ref{top-lie-alg-comhom-section} and Section \ref{top-van-est-thm-section}, respectively.  Along the way, we provide a reasonably self contained introduction to the theory of Lie algebra cohomology, with the hope of making our results more accessible to non-experts. It should also be mentioned that the topological van Est theorem was already stated, and used, in \cite{polynomial-cohomology} and is an important tool in our ongoing project concerning polynomial cohomology of nilpotent Lie groups. Throughout the paper, emphasis will be put on the arguments pertaining to Lie algebra cohomology and even though cohomology of Lie groups is a central theme in Section \ref{top-van-est-thm-section}, we will assume familiarity with this theory (although references will be given whenever appropriate) which can be found in \cite[Chapter III]{guichardet-book} or \cite[Chapter X]{borel-wallach}. We will develop the cohomology theory for Lie algebras within the framework of relative homological algebra,  primarily following \cite{guichardet-book},   and in many cases the passage from the algebraic context to the topological one merely consists of making sure that all maps involved respect the topologies.


\subsubsection*{Acknowledgements} The author gratefully acknowledges the financial support from the Lundbeck foundation (grant R69-A7717) and the Villum foundation (grant 7423).  Furthermore, thanks are certainly due to Henrik Densing Petersen, whose idea it was to topologize the Lie algebra cohomology and extend the van Est theorem to the topological setting.

\section{Topological Lie algebra cohomologi}\label{top-lie-alg-comhom-section}
Throughout this section, $\mfg$ denotes a (finite dimensional) Lie algebra over the reals and $\mfh$ denotes a Lie sub-algebra of $\mfg$. For now, there are no restrictions on $\mfh$, but in order to develop the cohomology theory of $\mfg$ relative to $\mfh$ we will soon  require $\mfh$ to be reductive; see section \ref{standard-reso-subsec} and Remark \ref{reductive-rem}.

\begin{defi}
A continuous (or topological) $\mfg$-module is a Hausdorff topological vector space (t.v.s.) $E$ with an action of the Lie algebra $\mfg$ such that each $X\in \mfg$ acts as a continuous operator.  A morphism of continuous $\mfg$-modules (also simply referred to as a $\mfg$-morphism) is a continuous, linear map of t.v.s.~that intertwines  the $\mfg$-actions. An element $\xi\in E$ is said to be $\mfg$-invariant if $X.\xi=0$ for all $X\in \mfg$ and the set of $\mfg$-invariant elements is denoted $E^\mfg$.
\end{defi}
By the universal property of  the enveloping algebra $\Ug$, any Lie algebra representation extends to an algebra representation of $\Ug$, and if the representation of $\mfg$ is by continuous operators on a t.v.s., then so is the induced representation of $\Ug$. In what follows, we will freely identify the representation of $\mfg$ with the corresponding representation of $\Ug$.  The key to getting homological algebra working in this topological context is to pin-point the right definition of morphisms and injective modules, which we will adapt, mutatis mutandis, from from the corresponding theory for groups \cite[Chapter III]{guichardet-book}:

\begin{defi}
Let $E$ and $F$ be continuous  $\mfg$-modules. An injective $\g$-morphism $f\colon E\to F$ is said to be \emph{$\mfh$-strengthened} if there exists a continuous $\mfh$-equivariant map $s\colon F\to E$ such that $s\circ f=\id_E$. A general $\mfg$-morphism $f\colon E\to F$ is said to be $\mfh$-strengthened if both the inclusion $\ker(f) \to E$ and the induced map $E/\ker(f) \to F$ are $\mfh$-strengthened in the sense just defined. Lastly, a $\mfg$-morphism $f\colon E\to F$ is called \emph{strengthened} if is $\mfh$-strengthened with respect to the trivial  subalgebra.
\end{defi}

\begin{defi}
A continuous $\mfg$-module $E$ is called \emph{$\mfh$-relative injective} if for any two other such modules $A, B$, any $\mfh$-strengthened injective $\mfg$-morphism $\iota\colon A\to B$ and any $\mfg$-morphism $f\colon A\to E$ there exists a $\mfg$-morphism $\tilde{f}\colon B\to E$ such that $f=\tilde{f}\circ \iota$. When the subalgebra $\mfh$ is the trivial one, we simply refer to $\mfh$-relative injective modules as being \emph{relative injective}.
\end{defi}

One may now consider  $\mfh$-strengthened, $\mfh$-relative injective resolutions of a given $\mfg$-module $E$ and an adaptation of the standard arguments from homological algebra (carried out in detail in Appendix \ref{rel-hom-alg-appendix}), implies that given any two such resolutions, upon passing to $\mfg$-invariants and thereafter to cohomology, the resulting cohomology spaces are isomorphic in each degree --- the isomorphism being as (generally non-Hausdorff!) topological vector spaces. Thus, if we can show that any continuous $\mfg$-module $E$ admits such a resolution, then the cohomology of $\mfg$, relative to $\mfh$, with coefficients in $E$ is well defined as a topological object; in what follows we provide such a resolution under the mild additional assumption that  $E$ is locally convex.

\subsection{Morphism spaces}

Let $E$ and $F$ be Hausdorff t.v.s.~and consider the space $\Hom(E,F)$ of continuous linear maps. On $\Hom(E,F)$ we will consider the topology of \emph{uniform convergence on compacts}, which is defined by the following family of neighbourhoods of zero, indexed by the compact subsets $K\subset E$ and open zero-neighbourhoods $V\subset F$:
\[
N_{K,V}:=\{T\in \Hom(E,F) \mid T(K)\subset V\}.
\]
Note that when the topology on $F$ is locally convex, and thus  generated by a family of semi-norms $(p_i)_{i\in I}$, then the family of seminorms
\[
p_{K,i}(T):=\sup_{\xi \in K} p_i(T\xi), \quad i\in I, K\subset E \text{ compact},
\]
generates the topology on $\Hom(E,F)$. In this case, the topology of uniform convergence on compacts is therefore a locally convex, Hausdorff vector space topology as well.
We shall be particularly interested in the case when $E$ is the enveloping algebra $\Ug$; recall that $\Ug$ is the quotient of the tensor algebra $\mathcal{T}(\mfg)$ by the ideal generated by elements of the form $x\tens y- y\tens x-[x,y]$. As $\mathcal{T}(\mfg)$ is the increasing union of the finite dimensional subspaces $ \oplus_{k=0}^n \mfg^{\tens k} $, by pushing these spaces down to $\Ug$, we obtain a strictly increasing family of finite dimensional subspaces $(V_n)_{n\in \NN}$ whose union is all of $\Ug$. On $\Ug$ we may therefore consider the inductive limit topology \cite[II,6]{schaefer} defined by the filtration $(V_n)_n$; recall that this is the finest locally convex topology on $\Ug$ such that all the inclusions $V_n\hookrightarrow \Ug$ are continuous.  Note also, that a linear map $T\colon\Ug\to F$ into some locally convex space $F$ is continuous iff $T\restriction_{V_n}$ is continuous for all $n$ \cite[II 6.1]{schaefer}, and since $V_n$ is finite dimensional this is automatic. This shows the following:
 
\begin{lem}
The inductive limit topology on $\Ug$ defined by the subspaces $(V_n)_n$ coincides with the maximal locally convex topology; i.e.~the one defined by declaring that all possible seminorms on $\Ug$ be continuous. 
\end{lem}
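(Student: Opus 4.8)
The plan is to show the two topologies agree by proving each is finer than the other, where I write $\tau_{\mathrm{ind}}$ for the inductive limit topology and $\tau_{\max}$ for the maximal locally convex topology, and where ``finer'' means ``having more open sets''.

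One inclusion is free. Since $\tau_{\max}$ is by definition the finest locally convex vector space topology on $\Ug$ (equivalently, the topology generated by the family of all seminorms), and since $\tau_{\mathrm{ind}}$ is itself a locally convex topology, every $\tau_{\mathrm{ind}}$-open set is automatically $\tau_{\max}$-open; that is, $\tau_{\mathrm{ind}}\subseteq\tau_{\max}$.

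For the reverse inclusion I would use the characterization of $\tau_{\max}$ via seminorms, which matches the formulation in the statement. It suffices to check that every seminorm $p$ on $\Ug$ is $\tau_{\mathrm{ind}}$-continuous, for then the generating neighbourhoods $\{p<\varepsilon\}$ of $\tau_{\max}$ are already $\tau_{\mathrm{ind}}$-open and hence $\tau_{\max}\subseteq\tau_{\mathrm{ind}}$. By the defining property of the locally convex final (inductive limit) topology recorded just before the statement, $p$ is $\tau_{\mathrm{ind}}$-continuous as soon as each restriction $p\restriction_{V_n}$ is continuous on $V_n$; and this is automatic, since $V_n$ is finite dimensional and every seminorm on a finite dimensional space is continuous. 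Combining the two inclusions gives $\tau_{\mathrm{ind}}=\tau_{\max}$.

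Equivalently, and perhaps even more directly, I could apply the fact established immediately above the statement---that every linear map from $\Ug$ into a locally convex space is $\tau_{\mathrm{ind}}$-continuous---to the identity map $\id\colon(\Ug,\tau_{\mathrm{ind}})\to(\Ug,\tau_{\max})$, whose target is locally convex; its continuity unwinds exactly to $\tau_{\max}\subseteq\tau_{\mathrm{ind}}$. The only thing that requires any care---and the nearest approximation to an obstacle in an otherwise soft argument---is to invoke the correct characterization of the maximal locally convex topology and to keep the two inclusions pointing the right way; the finite dimensionality of each $V_n$ removes any genuine analytic content by trivializing every continuity question on the pieces.
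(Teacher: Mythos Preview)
Your proof is correct and follows essentially the same approach as the paper: the key observation is that every restriction to the finite-dimensional pieces $V_n$ is automatically continuous, which forces the inductive limit topology to be the finest locally convex one. The paper phrases this via linear maps into locally convex spaces (your ``equivalent'' alternative at the end), while you additionally spell out the two inclusions via seminorms, but the substance is identical.
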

In particular, this implies that multiplication  from the left (or right) by a fixed element in $ \Ug$ is continuous. 



\begin{lem} On $\Hom(\Ug,F)$ the topology of uniform convergence on compacts coincides with the topology of pointwise convergence or, equivalently, the topology of pointwise convergence on any linear basis of $\Ug$.
In particular, $\Hom(\Ug, F)$ is a Fr{\'e}chet space whenever $F$ is.
\end{lem}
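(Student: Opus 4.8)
The plan is to compare the three topologies by squeezing them between one another, the crucial input being that compact subsets of $\Ug$ are ``small''. Writing $\tau_b$ for the topology of pointwise convergence on a fixed linear basis, $\tau_p$ for the topology of pointwise convergence, and $\tau_c$ for the topology of uniform convergence on compacts, one has the trivial inclusions $\tau_b\subseteq \tau_p\subseteq \tau_c$ (a single point is a compact set, and each basis vector is in particular a point). The whole content of the statement is therefore the reverse inclusion $\tau_c\subseteq \tau_b$, and to establish it I first record the key structural fact: since $\Ug$ carries the maximal locally convex topology, which by the previous lemma is the strict inductive limit topology associated to the filtration $(V_n)_n$ by finite dimensional subspaces, every bounded --- hence in particular every compact --- subset $K\subset\Ug$ is contained in some $V_n$ \cite[II,6]{schaefer}.

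Granting this, fix a basis $(e_j)_{j\in\NN}$ of $\Ug$ and a compact set $K$; choose $n$ with $K\subset V_n$ and let $e_1,\dots,e_m$ be those basis vectors spanning $V_n$. Each coordinate functional $\xi\mapsto c_j(\xi)$ is continuous on the finite dimensional space $V_n$, hence bounded on the compact $K$ by some constant $M$, and for every continuous seminorm $p_i$ on $F$ and every $T\in\Hom(\Ug,F)$ one gets
\[
p_{K,i}(T)=\sup_{\xi\in K}p_i(T\xi)\leq \sup_{\xi\in K}\sum_{j=1}^m |c_j(\xi)|\, p_i(Te_j)\leq M\sum_{j=1}^m p_i(Te_j).
\]
The right hand side is a finite sum of seminorms of the form $T\mapsto p_i(Te_j)$, which are exactly the defining seminorms of $\tau_b$; thus each generating seminorm $p_{K,i}$ of $\tau_c$ is $\tau_b$-continuous, giving $\tau_c\subseteq\tau_b$ and hence equality of all three topologies. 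In particular the topology does not depend on the chosen basis.

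It remains to deduce the Fr\'echet assertion. Because $\Ug$ has the maximal locally convex topology, every linear map $\Ug\to F$ is automatically continuous, so $\Hom(\Ug,F)$ consists of all linear maps, and such a map is freely and uniquely determined by its values on the (countable) basis $(e_j)_j$. The evaluation map $T\mapsto (Te_j)_{j\in\NN}$ is therefore a linear bijection onto $F^{\NN}$, and since the topology on $\Hom(\Ug,F)$ was just identified with pointwise convergence on the basis, this bijection is a homeomorphism when $F^{\NN}$ carries the product topology. A countable product of Fr\'echet spaces is again Fr\'echet --- metrizability, completeness and local convexity all pass to countable products --- so $\Hom(\Ug,F)$ is Fr\'echet whenever $F$ is. I expect the only genuine obstacle to be the boundedness fact invoked at the outset, i.e.~that compact sets live inside a single $V_n$; this is the one place where the inductive limit structure (rather than just local convexity) is essential, everything else being routine seminorm bookkeeping.
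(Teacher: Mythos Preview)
Your proof is correct and follows essentially the same strategy as the paper's: both pivot on the fact that compact subsets of $\Ug$ lie in some finite-dimensional $V_n$ (via the strict inductive limit structure), then reduce the uniform estimate to control on finitely many basis vectors. Two minor remarks. First, your seminorm estimate tacitly assumes $F$ is locally convex, whereas the paper argues with zero-neighbourhoods directly and so covers a general Hausdorff target; this is harmless for the applications in the paper but worth noting. Second, an arbitrary linear basis of $\Ug$ need not contain a subset spanning $V_n$ exactly --- what you want (and what suffices for the estimate) is finitely many basis vectors whose span \emph{contains} $V_n$, which always exists since $V_n$ is finite-dimensional. Your Fr\'echet argument via the identification $\Hom(\Ug,F)\cong F^{\NN}$ is a clean repackaging of the paper's direct countable-seminorms-plus-completeness check.
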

\begin{proof}
Since points in $\Ug$ are compact, uniform convergence on compacts clearly implies pointwise convergence. Assume, conversely, that $f_i\in\Hom(\Ug,F)$ is a net converging pointwise to zero and let $K\subset \Ug$ be a compact set. Since  compact sets are (totally) bounded \cite[I.5]{schaefer} and $\Ug$ is endowed with the strict inductive limit topology arising from the finite dimensional subspaces $V_n$, we conclude from \cite[II.6.4 \& 6.5]{schaefer} that there exists an $n_0$ such that $K\subset V_{n_0}$, and that $K$ is compact as a subset of $V_{n_0}$. Thus,  if $e_1,\dots, e_p$ is a linear basis for $V_{n_0}$ there exists an $R>0$ such that
\[
K\subset \left\{  \sum_{i=1}^p t_i e_i \ \Big{\rvert} \ |t_i| \leq R \right\}.
\] 
Now fix a zero-neighbourhood $V\subset F$ and choose a smaller zero-neighbourhood $V'\subset F$ such that $V'+V'+ \cdots +V'\subset V$ ($p$ summands). Since the scalar action on $F$ is continuous there exists a zero neighbourhood $V''$ such that $tV''\subset V'$ whenever $|t|\leq R$. Lastly, since $f_i$ is assumed to converge pointwise to zero, there exists an $i_0$ such that $f_i(e_k)\in V''$ for all $i\geq i_0$ and all $k=1,\dots, p$, and hence $f_i(K)\subset V$ for all $i\geq i_0$ as desired. Clearly pointwise convergence and pointwise convergence on a linear basis is the same as we are dealing with linear maps. Assume now that $F$ is Fr{\'e}chet and choose a countable family of seminorms $(p_i)_{i\in \NN}$ and a countable linear basis $(e_l)_{l\in \NN}$ for $\Ug$. Then the topology of pointwise convergence is generated by the countable family of seminorms
$
p_{i,l}(f):=p_i(f(e_l)),
$
and the completeness of $F$ translates into  completeness of $\Hom(\Ug,F)$ for the topology of pointwise convergence;  hence $\Hom(\Ug,F)$ is a Fr{\'e}chet space.
\end{proof}

\begin{rem}
In what follows, we will also be considering spaces of the form $\Hom(\Ug\tens V  , F)$ for some finite dimensional space $V$. 
Since $V$ is finite dimensional, $\Ug\tens V$ is nothing but a finite amplification of $\Ug$ and we will always consider $\Ug\tens V$ with the direct sum topology topology arising from the identification with a finite direct sum of copies of $\Ug$.

\end{rem}

\subsection{The standard resolution}\label{standard-reso-subsec}
Consider again a Lie algebra $\mfg$ and a reductive subalgebra $\mfh\leq \mfg$. Recall that $\mfh$ is reductive if it splits as a direct sum of a semi-simple and an abelian Lie algebra, and that this is the case whenever $\mfh$ is the Lie algebra of a compact group  --- this will be the main case of interest to us in connection with the van Est theorem.  For $X\in \mfg$ we denote by $\bar{X}$ the class of $X$ in $\mfg/\mfh$. Let $E$ be a locally convex,  continuous $\mfg$-module and denote by $E^n$ the subspace of $\Hom\left(\Ug\tens \wedge^n\mfg/\mfh, E\right)$ consisting of those $f$ satisfying
\begin{align}\label{h-invariance}
Y.f(u, \bar{X}_1, \dots, \bar{X}_n)- f(Yu, \bar{X}_1,\dots, \bar{X}_n) - \sum_{i=1}^n f(u, \bar{X}_1, \dots, \overline{[Y,X_i]},\dots, \bar{X}_n )=0
\end{align}
Here, and in what follows, we think of $\Hom\left(\Ug\tens \wedge^n\mfg/\mfh, E\right)$ as the set of multilinear maps from the cartesian product $\Ug\times \prod_{i=1}^n \mfg/\mfh $ to $E$ which are antisymmetric an all but the first variables, which this makes sense since $E$ is assumed locally convex and hence any linear map from $\Ug\tens \wedge^n\mfg/\mfh$ to $E$ is continuous. Since $E$ is a continuous $\mfg$-module, $E^n$ is a closed subspace of $\Hom\left(\Ug\tens \wedge^n\mfg/\mfh, E\right)$ and since multiplication by $X\in \mfg$ is in $\Hom(\Ug,\Ug)$, $E^n$ may be turned into a continuous $\mfg$-module by setting
\[
(X.f) (u,\bar{X}_1,\dots, \bar{X}_n):= f(uX, \bar{X}_1,\dots, \bar{X}_n).
\]
Now define $d^n\colon E^n\to E^{n+1}$ by the formula\footnote{As is standard,  $\hat{\bar{X}}_i$ here means that $\bar{X}_i$ is left out.}
\begin{align*}
d^n(f)(u,\bar{X}_1,\dots, \bar{X}_{n+1}) &: = \sum_{i=1}^{n+1} (-1)^{i+1} X_if(u,\bar{X}_1,\dots, \hat{\bar{X}}_i, \dots, \bar{X}_{n+1})+\\
&+ \sum_{i=1}^{n+1}(-1)^{i} f({X}_i u,\bar{X}_1,\dots, \hat{\bar{X}}_i,\dots, \bar{X}_{n+1})+\\
&+ \sum_{i<j} f(u,\overline{[X_i,X_j]},\bar{X}_1,\dots, \hat{\bar{X}}_i, \dots, \hat{\bar{X}}_j,\dots, \bar{X}_{n+1}).
\end{align*}
and define $\varps\colon E\to E^0:=\Hom(\Ug,E)$ by $\varps(\xi)(u):=u.\xi$. Note that the definition of $d^n$ makes sense  because of the imposed invariance \eqref{h-invariance}.  Moreover, it is clear that the maps $d^n$ are continuous and one may show that $d^0\circ \varps=0$ and $d^{n+1}\circ d^n=0$ for $n\geq 0$ \cite[Chapter II]{guichardet-book}.  Our aim now is to prove that
the complex
\begin{align}\label{standard-reso}
0\To E\overset{\varps}{\To} E^0 \overset{d^0}{\To} E^1 \overset{d^1}{\To} E^2 \overset{d^2}{\To} \cdots
\end{align}
is a $\mfh$-strengthened, $\mfh$-relative injective resolution of the continuous $\mfg$-module $E$; the resolution \eqref{standard-reso} will be referred to as the \emph{the standard resolution} of $E$ in the sequel.
Actually, the proof of this involves passing to an isomorphic complex which we will now describe. Denote by $\tilde{E}^n$ the subspace of $\Hom(\Ug\tens \wedge^n\mfg/\mfh, E)$ satisfying
\[
f(uY, \bar{X}_1, \dots, \bar{X}_n)=\sum_{i=1}^nf(u, \bar{X}_1,\dots, \overline{[Y,X_i]}, \dots, \bar{X}_n)
\]
for all $Y\in \mfh$, $u\in \Ug$ and $\bar{X}_1, \dots, \bar{X}_n\in \mfg/\mfh$ and endow it with a continuous $\mfg$-action by setting
\begin{align}\label{tilde-g-action}
(X.f)(u,\bar{X}_1,\dots, \bar{X}_n):= X.f(u,\bar{X}_1,\dots, \bar{X}_n)-f(Xu,\bar{X}_1,\dots, \bar{X}_n).
\end{align}
Furthermore define $\tilde{\varps}\colon E\to \tilde{E}^0$ and $d^n\colon \tilde{E}^n\to \tilde{E}^{n+1}$ by setting
\begin{align}\label{tilde-coboundary-map}
\tilde{d}^n(f)(u, \bar{X}_1,\dots, \bar{X}_{n+1})&:=\sum_{i=1}^{n+1} (-1)^{i+1}f(uX_i, \bar{X}_1,\dots, \hat{\bar{X}}_i,\dots, \bar{X}_{n+1}) + \notag\\
& + \sum_{i<j}(-1)^{i+j} f(u, \overline{[X_i,X_j]}, \bar{X}_1,\dots, \hat{\bar{X}}_i, \dots, \hat{\bar{X}}_j, \dots, \bar{X}_{n+1}),\notag\\
\tilde{\varps}(\xi)(u)&:=\eta(u).\xi,
\end{align}
where $\eta\colon \Ug\to \RR$ is the augmentation map. The claim now is the following:
\begin{prop}\label{the-two-isomorphic-resolutions}
The complexes 
\begin{align}
&0\To E   \overset{\varps}{\To } E^0  \overset{d^0}{\To} E^1 \overset{d^1}{\To }  E^2  \overset{d^2}{\To }\cdots \label{E-reso}\\
& 0\To E   \overset{\tilde{\varps}}{\To } \tilde{E}^0  \overset{\tilde{d}^0}{\To} \tilde{E}^1 \overset{\tilde{d}^1}{\To }  \tilde{E}^2  \overset{\tilde{d}^2}{\To }\cdots \label{E-tilde-reso}
\end{align}
are isomorphic as complexes of topological $\mfg$-modules and are  $\mfh$-strengthened, $\mfh$-relative injective resolutions of $E$.
\end{prop}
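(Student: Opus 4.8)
The proposition asserts three things at once, and I would separate them cleanly. By the relative-homological machinery of Appendix~\ref{rel-hom-alg-appendix} it suffices, for the ``resolution'' and ``$\mfh$-strengthened'' clauses, to exhibit a contracting homotopy consisting of continuous $\mfh$-equivariant maps; the ``$\mfh$-relative injective'' clause is logically independent and concerns the individual terms. My plan is therefore: (i) produce a topological $\mfg$-module isomorphism between the complexes \eqref{E-reso} and \eqref{E-tilde-reso}; (ii) build an $\mfh$-equivariant continuous contracting homotopy; (iii) show that each term is $\mfh$-relative injective. Because (i) transports every relevant structure between the two complexes, I would carry out (ii) and (iii) on whichever side is more convenient.

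For (i), regard $f\in E^n$ as a map $F\colon\Ug\to W_n$ with $W_n:=\Hom(\wedge^n\mfg/\mfh,E)$, the latter carrying the $\mfh$-action combining the coefficient action on $E$ with the adjoint action on $\wedge^n\mfg/\mfh$; then \eqref{h-invariance} says precisely that $F$ intertwines left multiplication on $\Ug$ with the $\mfh$-action on $W_n$, so that $E^n\cong\Coind_{\mfh}^{\mfg}W_n$ with $\mfg$ acting by right multiplication, while the defining relation of $\tilde E^n$ exhibits it likewise as a coinduced module (the coefficient action now absorbed into the $\mfg$-action \eqref{tilde-g-action}). The comparison map $\Phi_n\colon E^n\to\tilde E^n$ is the Hopf-algebraic untwisting $(\Phi_n f)(u,\bar X_1,\dots,\bar X_n):=\sum u_{(1)}.f(S(u_{(2)}),\bar X_1,\dots,\bar X_n)$, written with the coproduct and antipode $S$ of $\Ug$. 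A direct computation shows that $\Phi_n$ sends $E^n$ into $\tilde E^n$, intertwines the $\mfg$-actions (right multiplication against \eqref{tilde-g-action}), commutes with the differentials and augmentations, and is an involution (using cocommutativity of $\Ug$ and $S^2=\id$), hence bijective. Bicontinuity is immediate from the lemma describing the topology on $\Hom(\Ug,F)$ as pointwise convergence, since $\Phi_n$ is given pointwise by the fixed operations $\Delta$, $S$ and the continuous coefficient action.

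For (ii), I would work on the $\tilde E^\bullet$-side, where the coboundary \eqref{tilde-coboundary-map} involves only right multiplication by the $X_i$ and the bracket terms. The classical contracting homotopy for the standard resolution, assembled from the augmentation $\eta\colon\Ug\to\RR$ and evaluation at $1\in\Ug$, provides continuous operators $h^n\colon\tilde E^n\to\tilde E^{n-1}$ (and $h^0\colon\tilde E^0\to E$) satisfying the homotopy identities $\tilde{d}^{n-1}h^n+h^{n+1}\tilde{d}^{n}=\id$ and $h^0\tilde\varps=\id_E$; continuity is again read off pointwise. The one genuinely relative point is that $h^\bullet$ must be $\mfh$-equivariant, and this is where reductivity of $\mfh$ is used: an $\ad(\mfh)$-invariant complement $\mfm$ to $\mfh$ in $\mfg$ (available in our setting, in particular when $\mfh$ is the Lie algebra of a compact group) lets one choose the homotopy $\mfh$-equivariantly. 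Transporting $h^\bullet$ through $\Phi$ then equips \eqref{E-reso} with the analogous homotopy, and by the appendix both complexes are $\mfh$-strengthened resolutions.

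For (iii) I would prove once and for all that a coinduced module $\Coind_{\mfh}^{\mfg}W$ is $\mfh$-relative injective and then appeal to the identification from (i). The mechanism is a topological Frobenius reciprocity: evaluation at $1$ gives a natural bijection $\Hom_{\mfg}(B,\Coind_{\mfh}^{\mfg}W)\cong\Hom_{\mfh}(B,W)$, with inverse $\psi\mapsto\bigl(b\mapsto(u\mapsto\psi(u.b))\bigr)$, both continuous by the $\Hom(\Ug,-)$ lemma. Granting this, injectivity is formal: given an $\mfh$-strengthened injection $\iota\colon A\to B$ with continuous $\mfh$-equivariant splitting $s$ and a $\mfg$-morphism $f\colon A\to\Coind_{\mfh}^{\mfg}W$, pass to $\psi:=\operatorname{ev}_1\circ f$, extend it to the $\mfh$-morphism $\psi\circ s\colon B\to W$ (indeed $\psi s\iota=\psi$), and pull back through reciprocity to a $\mfg$-morphism $\tilde f$ with $\tilde f\circ\iota=f$. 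I expect the main obstacle to be topological rather than algebraic: one must check that the reciprocity isomorphism, the assignment $\psi\mapsto\psi\circ s$, and hence the extension $\tilde f$ are all continuous, which rests on the continuity of the splitting $s$ furnished by the $\mfh$-strengthened hypothesis and on the pointwise detection of continuity for maps out of $\Ug$.
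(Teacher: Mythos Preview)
Your proposal is correct and follows essentially the same route as the paper: the Hopf-algebraic isomorphism $\Phi_n$ is exactly the paper's map $B$ (and its inverse $B'$, defined by the same formula), your Frobenius-reciprocity argument for $\mfh$-relative injectivity unwinds to the paper's explicit extension formula $\tilde f(b)(u,\bar X_1,\dots,\bar X_n)=f(s(u.b))(1,\bar X_1,\dots,\bar X_n)$, and the $\mfh$-equivariant contracting homotopy you sketch is precisely what the paper obtains by dualizing the homological complex from \cite[Lemme~2.2 \& 2.3]{guichardet-book} (where reductivity of $\mfh$ enters, as you correctly flag). The only cosmetic difference is that the paper proves injectivity directly on $E^n$ rather than passing through the coinduced-module abstraction.
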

The complex \eqref{E-tilde-reso} will be referred to as the \emph{twisted standard resolution} below. Strictly speaking we do now know that  \eqref{E-tilde-reso}  is a complex at this point, but this will  follow from the proof of Proposition \ref{the-two-isomorphic-resolutions}.
\begin{proof}
The strategy is to provide a bicontinuous isomorphism between the two complexes, prove that the complex \eqref{E-tilde-reso} is a $\mfh$-strengthened resolution and that each $E^n$ is $\mfh$-relative injective. To see that \eqref{E-tilde-reso} is strengthened, one realizes it (algebraically) as the dual of a certain homological complex $(C_n,d_n)$ which is proven to be exact and (algebraically) $\mfh$-strengthened in \cite[Lemme 2.2 \& 2.3]{guichardet-book}. This homological complex therefore admits a  $\mfh$-linear contracting homotopy  $(s_n)_n$ and dualizing this we obtain a $\mfh$-linear contracting homotopy $(s^n)_n$ for the complex \eqref{E-tilde-reso}. But since $s^n(f):=f\circ s_n$ and the topology on $\tilde{E}^n\subseteq \Hom(\Ug\tens \wedge^n \mfg/\mfh, E)$ is the topology of pointwise convergence, it is clear that the maps $s^n$ are automatically continuous. This proves that \eqref{E-tilde-reso} is strengthened (see e.g.~Proposition \ref{strongly-exact-gives-contractible} and Remark \ref{converse-rem}).  To see that $E^n$ is indeed $\mfh$-relative injective, consider the setup for $\mfh$-relative injectivity; that is a diagram of the form
\[
\xymatrix{
0 \ar[r] & A \ar[d]_f \ar[r]_\iota & B  \ar@/_/@{->}[l]_{s} \ar@{-->}[dl]^{\exists? \tilde{f}}  \\
 & E^n &
}
\]
in which $\iota$ and $f$ are $\mfg$-morphisms and $s$ is a continuous $\mfh$-linear map witnessing the fact that $\iota$ is assumed strengthened.
We then define $\tilde{f}$ by setting 
\[
\tilde{f}(b)(u,\bar{X}_1,\dots ,\bar{X}_n):= f(s(u.b))(1,\bar{X}_1,\dots, \bar{X}_n).
\]
A direct computation shows that $\tilde{f}$ is $\g$-linear and that $\tilde{f}\circ \iota=f$,
 so we only need to prove that $\tilde{f}$ is continuous. 
So, let $b_j\to_j 0$ in $B$ and let $u\in \Ug$ and $\bar{X}_1, \dots, \bar{X}_n \in \mfg/\mfh $ be given. Then since $B$ is a continuous $\mfg$-module and both $f$ and $s$ are continuous we have $f(s(u.b_j))\To_j 0$; in particular
\[
\tilde{f}(b_j)(u,\bar{X}_1,\dots, \bar{X}_n)= f(s(u.b))(1,\bar{X}_1,\dots, \bar{X}_n) \underset{j}{\To} 0
\]
and thus $\tilde{f}(b_j) \to_j 0$, as desired. Lastly we have to provide the isomorphism between the two complexes. For this we need the Hopf algebra structure on $\Ug$ (see eg.~\cite{KS}). Denote the coproduct and antipode by $\Delta$ and $S$, respectively, and note that by the universal property of the topology on $\Ug$ both these maps are continuous (here the algebraic tensor product $\Ug\tens \Ug$ is also endowed with  the maximal locally convex topology). 
We will also be using the so-called leg numbering notation and write $\Delta(u)=u_{(1)}\tens u_{(2)}$ whenever convenient.  For each $f\in \Hom(\Ug\tens \wedge^n \mfg/\mfh, E)$, denote by $A_f\in \Hom(\Ug\tens \Ug\tens\wedge^n \mfg/\mfh, E)$ the operator
\[
A_f(u, v, \bar{X}_1,\dots, \bar{X}_n ):=uf(S(v),\bar{X}_1,\dots, \bar{X}_n).
\]
Note that since $E$ is locally convex and $A_f$ is linear by construction it is automatically con\-ti\-nuous; i.e.~in the $\Hom$-set as implicitly claimed above. Moreover, since $E$ is a topological $\mfg$-module, it follows  that the map 
\begin{align*}
A\colon \Hom(\Ug\tens\wedge^n \mfg/\mfh, E) &\To \Hom(\Ug\tens \Ug\tens\wedge^n \mfg/\mfh, E)\\
 f &\longmapsto A_f
\end{align*}
 is continuous.
Now define $B\colon \tilde{E}^n \to E^n$  by
\[
B(f)\left(u,\bar{X}_1,\dots, \bar{X}_n\right):= A_f\left(\Delta(u),\bar{X}_1, \dots, \bar{X}_n\right)=u_{(1)} f\left(S(u_{(2)}),\bar{X}_1,\dots, \bar{X}_n\right).
\]
Note that $B(f) = A_f\circ  (\Delta\tens \id_{\wedge^n \mfg/\mfh})$ is linear and hence  continuous as implicitly claimed above. 
One now needs to verify that $B$ is well-defined (i.e.~that it actually takes values in $E^n$) and that it is $\mfg$-linear and commutes with the coboundary maps. 
Similarly, defining $B'\colon E^n \to \tilde{E}^n$ by the same formula, one may check that $B'\circ B=\id_{\tilde{E}^n}$ and $B\circ B'=\id_{E^n}$. 
These claims and their proofs are of a purely algebraic nature, so we skip the details and refer to the proof of \cite[Lemme 2.6]{guichardet-book} for more details.
Lastly, we note that since $B(f) = A_f\circ (\Delta\tens \id_{\wedge^n \mfg/\mfh})$ the continuity of $A$ implies the continuity of $B$, and hence of $B'$ which is defined by the same formula.

\end{proof}
\begin{rem}\label{reductive-rem}
In the beginning of the proof of Proposition \ref{the-two-isomorphic-resolutions}, we skipped some of the algebraic details regarding the homological complex predual to \eqref{E-tilde-reso} and, in particular, the proof that it is exact and algebraically $\mfh$-strengthened. We note here that this is not completely trivial and is where our standing assumption that $\mfh$ be reductive is used.
\end{rem}

All of the above combines with the standard results of Appendix \ref{rel-hom-alg-appendix} to show that the following definition makes sense; i.e.~that the cohomology groups defined are unique up to linear homeomorphism.
\begin{defi}
Let $E$ be a locally convex topological $\mfg$-module. Then the (topological)  relative cohomology $\Cohom^n(\mfg, \mfh, E)$ is defined as the cohomology of the complex
\[
0\To \left(E^0\right)^{\mfg} \overset{d^0\restriction}{\To} \left(E^1\right)^{\mfg} \overset{d^1\restriction}{\To} \left(E^2\right)^{\mfg} \overset{d^2\restriction}{\To}\left(E^3\right)^{\mfg}\overset{d^3\restriction}{\To} \cdots
\]
for some/any $\mfh$-relative injective, $\mfh$-strengthened resolution 
\[
0 \To E \To  E^0 \overset{d^0}{\To} E^1\overset{d^1}{\To} E^2 \overset{d^2}{\To} E^3\overset{d^3}{\To} \cdots
\]
of $E$ in the category of topological $\mfg$-modules. Similarly, the  reduced cohomology $\underline{\Cohom}^n(\mfg, \mfh, E)$ is defined as the quotient of the closed subspace $\ker(d^n\restriction)$ by the closure of $\im(d^{n-1}\restriction)$.  When $\mfh=\{0\}$, the cohomology is denoted $\Cohom^n(\mfg,E)$ and $\underline{\Cohom}^n(\mfg,E)$, respectively.
\end{defi}

\begin{rem}
Note that the cohomology groups $\Cohom^n(\mfg, \mfh, E)$ are, as linear spaces,  nothing but the usual cohomology 
defined by forgetting that $E$ is a \emph{topological} $\g$-module and simply using $\mfh$-strengthened, $\mfh$-relative injective  resolutions in the category of \emph{algebraic} $\mfg$-modules. Thus, the novelty is only in the fact that the cohomology spaces carry a natural  (generally non-Hausdorff!), locally convex vector space topology. The fact that $\Cohom^n(\mfg,\mfh, E)$ may not be Hausdorff makes it unpleasant to work with from a functional analytic point of view, and the reduced cohomology is introduced exactly to remedy this problem:  $\underline{\Cohom}^n(\mfg, \mfh, E)$ is obtained from $\Cohom^n(\mfg, \mfh, E)$  by dividing out the closure of the class of zero, and is thus the biggest possible Hausdorff quotient of $\Cohom^n(\mfg, \mfh, E)$. This point of view also shows that  the topology on $\underline{\Cohom}^n(\mfg, \mfh, E)$ is canonically defined; i.e.~independent of the choice of injective resolution with which it is computed.
\end{rem}

\subsection{Computation by inhomogeneous cochains}\label{inhomogen-cochains-section}

In this section we show that $\Cohom^\bullet(\mfg, \mfh, E)$ can be computed, as a topological object, by the standard complex of inhomogeneous cochains. Consider the space
$\Hom_{\mfh}(\wedge^n \mfg/\mfh, E)$ consisting of those $f\in \Hom(\wedge^n \mfg/\mfh, E)$ which satisfy
\begin{align}\label{h-equiv-inhom}
Y.f(\bar{X}_1,\dots, \bar{X}_n)=\sum_{i=1}^n f(\bar{X}_1,\dots, \bar{X}_{i-1}, \overline{[Y,X_i]},\bar{X}_{i+1},\dots, \bar{X}_{n}),
\end{align}
endowed with the usual topology of uniform convergence on compacts, which, since $\wedge^n \mfg/\mfh$ is finite dimensional, is nothing but the topology of pointwise convergence. When $n=0$, this definition requires a bit of interpretation: the sum on the right hand side of \eqref{h-equiv-inhom} is empty, and hence zero, and  $\wedge^0\mfg/\mfh=\RR$ so that  $\Hom_{\mfh}(\wedge^0 \mfg/\mfh, E)$ identifies with $E^\mfh$. Define now a coboundary map $d^n\colon \Hom_{\mfh}(\wedge^n \mfg/\mfh, E) \to \Hom_{\mfh}(\wedge^{n+1} \mfg/\mfh, E)$ by setting
\begin{align}\label{inhomogen-coboundary-map}
d^n(f)\left(\bar{X}_1,\dots, \bar{X}_n\right) &:= \sum_{i=1}^{n+1} (-1)^{i+1} X_i.f\left(\bar{X}_1,\dots, \hat{\bar{X}}_i, \dots, \bar{X}_{n+1}\right) + \notag\\
 &+ \sum_{i<j} (-1)^{i+j} f\left(\overline{[X_i,X_j]}, \bar{X}_1,\dots, \hat{\bar{X}}_i,\dots, \hat{\bar{X}}_j,\dots, \bar{X}_{n+1}\right).
\end{align}
Denoting by by $(E^n, d^n)$ the standard resolution described in the previous section, we now have the following:
\begin{prop}\label{inhomogeneous-complex-prop}
The map $\alpha_n\colon (E^n)^{\mfg} \To \Hom_{\mfh}(\wedge^n\mfg/\mfh, E)$ defined by 
\[
\alpha_n(f)(\bar{X}_1,\dots, \bar{X}_n):=f(1, \bar{X}_1,\dots, \bar{X}_n)
\]
 is a topological isomorphism, with inverse $\alpha_n^{-1}(g)(u, \bar{X}_1,\dots, \bar{X}_n )= ug(\bar{X}_1,\dots, \bar{X}_n)$, and the collection $(\alpha_n)_{n}$ defines an isomorphism of complexes 
 \[
 \big((E^n)^\mfg, d^n \restriction \big) \simeq \big(\Hom_{\mfh}(\wedge^n\mfg/\mfh, E), d^n\big).
 \]
Hence the complex 
\begin{align*}
0\to E^{\mfh}\overset{d^0}{\To} \Hom_{\mfh}(\mfg/\mfh, E) \overset{d^1}{\To} \Hom_{\mfh}(\mfg/\mfh \wedge \mfg/\mfh, E) \overset{d^2}{\To} \Hom_{\mfh}(\wedge^3\mfg/\mfh, E) \overset{d^3}{\To} \cdots,
\end{align*}
known as the inhomogeneous cochain complex,  computes the topological Lie algebra cohomology $\Cohom^n(\mfg,\mfh,E)$.
\end{prop}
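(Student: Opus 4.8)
The plan is to verify by direct computation that $\alpha_n$ and the map $g\mapsto[(u,\bar X_1,\dots,\bar X_n)\mapsto u.g(\bar X_1,\dots,\bar X_n)]$ are mutually inverse, bicontinuous, and intertwine the two coboundary operators; the claim about the inhomogeneous complex is then immediate. It is cleanest to compute the cohomology with the twisted standard resolution $(\tilde E^n,\tilde d^n)$, which is permissible by Proposition~\ref{the-two-isomorphic-resolutions} since that complex is an $\mfh$-strengthened, $\mfh$-relative injective resolution and the cohomology is independent of the resolution chosen; it is precisely its $\mfg$-invariants that reproduce the stated inverse $u.g$. (For the untwisted resolution the invariants instead satisfy $f(u,\dots)=\eta(u)f(1,\dots)$, giving the inverse $g\mapsto\eta(u)g$.) The one genuine preliminary is the description of these invariants: writing out $X.f=0$ for the twisted action \eqref{tilde-g-action} gives $X.f(u,\dots)=f(Xu,\dots)$ for all $X\in\mfg$ and $u\in\Ug$, and a one-line induction on the degree of a monomial in $\Ug$ upgrades this to
\[
f(u,\bar X_1,\dots,\bar X_n)=u.f(1,\bar X_1,\dots,\bar X_n)\qquad(u\in\Ug).
\]
Thus an invariant $f$ is completely determined by its value at $1\in\Ug$, which is exactly the content of $\alpha_n$ being a bijection with the asserted inverse.

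First I would check that $\alpha_n$ lands in $\Hom_{\mfh}(\wedge^n\mfg/\mfh,E)$. For $Y\in\mfh$ the invariance just derived gives $Y.f(1,\dots)=f(Y,\dots)$, while the defining relation of $\tilde E^n$, taken at $u=1$, gives $f(Y,\dots)=\sum_i f(1,\dots,\overline{[Y,X_i]},\dots)$; combining the two is precisely the equivariance \eqref{h-equiv-inhom} of $g=\alpha_n(f)$. Conversely, for $g\in\Hom_{\mfh}(\wedge^n\mfg/\mfh,E)$ I set $f(u,\dots):=u.g(\dots)$; the defining relation of $\tilde E^n$ follows from \eqref{h-equiv-inhom} after applying $u$, and $\mfg$-invariance is immediate from $X.(u.g)=(Xu).g$. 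The two composites reduce to $1.\xi=\xi$ and to the displayed invariance, so $\alpha_n$ and $\alpha_n^{-1}$ are inverse to one another. For the topological claim I would note that both spaces carry the topology of pointwise convergence (on $\tilde E^n$ this is the restriction of the pointwise topology on $\Hom(\Ug\tens\wedge^n\mfg/\mfh,E)$, and on the target it coincides with uniform convergence on compacts since $\wedge^n\mfg/\mfh$ is finite dimensional); continuity of $\alpha_n$ is then continuity of evaluation at $1$, and continuity of $\alpha_n^{-1}$ holds because each $u\in\Ug$ acts on $E$ by a continuous operator. Hence $\alpha_n$ is a linear homeomorphism.

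It remains to see that $(\alpha_n)_n$ is a chain map, i.e.\ $\alpha_{n+1}\circ(\tilde d^n\restriction)=d^n\circ\alpha_n$. Evaluating $\tilde d^n f$ at $u=1$ and substituting $f(X_i,\dots)=X_i.f(1,\dots)=X_i.g(\dots)$ together with $f(1,\overline{[X_i,X_j]},\dots)=g(\overline{[X_i,X_j]},\dots)$ converts \eqref{tilde-coboundary-map} termwise into \eqref{inhomogen-coboundary-map}, with the signs matching exactly. Since $(\alpha_n)_n$ is therefore an isomorphism of complexes, the cohomology of $((\tilde E^n)^{\mfg},\tilde d^n\restriction)$, which by definition is $\Cohom^n(\mfg,\mfh,E)$, agrees as a topological vector space with that of the inhomogeneous cochain complex, which proves the final assertion. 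There is no deep obstacle here: the only real care needed is the bookkeeping, namely keeping the invariance relation, the $\tilde E^n$-relation, and the two sign conventions straight, and phrasing everything for the twisted resolution so that the inverse reads $u.g$ rather than $\eta(u)g$.
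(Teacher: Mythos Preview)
Your proof is correct and follows the same direct-verification strategy the paper's one-paragraph proof sketches. You add a genuine clarification the paper glosses over: the stated inverse $\alpha_n^{-1}(g)(u,\dots)=u.g(\dots)$ lands in the $\mfg$-invariants of the \emph{twisted} resolution $(\tilde E^n,\tilde d^n)$ (where invariance reads $f(u,\dots)=u.f(1,\dots)$), not of the untwisted $(E^n,d^n)$ as literally written (there invariance forces $f(u,\dots)=\eta(u)f(1,\dots)$, and the map $u\mapsto u.g(\dots)$ is neither in $E^n$ nor $\mfg$-invariant in general). Your decision to run the computation in $\tilde E^n$ is therefore the clean way to make the proposition's inverse formula come out verbatim; one could equally well stay with $E^n$ and replace the inverse by $\eta(u)g(\dots)$, and the chain-map check then goes through because the middle sum in $d^n$ vanishes at $u=1$. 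Either route is the same routine bookkeeping, and your handling of well-definedness, bicontinuity, and the intertwining with \eqref{inhomogen-coboundary-map} is accurate.
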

\begin{proof}
It is straight forward to see that the two maps are well defined and each others inverse and since the topology on both spaces is the topology of pointwise convergence, it is also clear that they are both continuous. A direct computation now verifies that the $\alpha_n$'s intertwine the coboundary maps and the proof is complete.
\end{proof}
\begin{rem}
As in most other cohomology theories, the degree one case is of special importance, so we digress for a moment to write down explicit formulas for the inhomogeneous cocycles and coboundaries: An inhomogeneous 1-cocycle is an $f\in \Hom_{\mfh}(\mfg/\mfh, E) $ which satisfies
\[
X.f(\bar{Y})- Y.f(\bar{X})-f(\overline{[X,Y]})=0,
\]
and $f$  is an inhomogeneous 1-coboundary exactly if there exists $\xi\in E^{\mfh}$ such that $f(\bar{X})=X.\xi$.
\end{rem}

\subsection{The Shapiro lemma}
In this section we prove that the Shapiro lemma (see e.g.~\cite[\S 7]{guichardet-book}) extends to the topological context. To this end, consider a Lie algebra $\mfg$ with a Lie sub-algebra $\mfh\leq \mfg$ (now no longer required to be reductive) as well as a locally convex topological $\mfh$-module $E$, and turn the space $\Hom_{\Uh}(\Ug, E)$ into a topological $\mfg$-module by setting
$(X.f)(u):=f(uX)$. When endowed with this action,  $\Hom_{\Uh}(\Ug, E)$ is called the \emph{coinduced module} and denoted $\Coind(E)$. In our context, the Shapiro lemma now states the following:
\begin{thm}[Topological Shapiro lemma]
Let $\mfh\leq \mfg$ be a  Lie sub-algebra and let $E$ be a topological $\mfh$-module. Then for all $n\in \NN_0$ there exists a topological isomorphism
\[
\Cohom^n(\mfh, E)\simeq \Cohom^n(\mfg, \Coind(E) ).
\]
\end{thm}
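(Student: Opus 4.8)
The plan is to run the classical argument for Shapiro's lemma --- \emph{coinduction is exact, preserves relative injectives, and sends $\mfg$-invariants of a coinduced module to $\mfh$-invariants of the original} --- while keeping track of all topologies. The algebraic engine is the Poincar\'e--Birkhoff--Witt theorem: choosing a linear complement $\mathfrak{m}$ of $\mfh$ in $\mfg$ with basis $X_1,\dots,X_m$, the ordered monomials $X^b$ ($b\in\NN^m$) form a free basis of $\Ug$ as a left $\Uh$-module, so that $\Ug\cong \Uh\tens\spann\{X^b\}$. Consequently, evaluation at these monomials yields, for every continuous $\mfh$-module $M$, a natural linear homeomorphism $\Coind(M)=\Hom_{\Uh}(\Ug,M)\cong\prod_b M$ for the topology of pointwise convergence, the inverse sending a family $(m_b)_b$ to the unique continuous $\Uh$-linear map taking the value $m_b$ at $X^b$; this is the relative analogue of the Lemma identifying the compact-open and pointwise topologies on $\Hom(\Ug,F)$ above.

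First I would produce the resolution. Take the twisted standard resolution $0\To E\To I^\bullet$ of $E$ as an $\mfh$-module relative to the trivial subalgebra, which is strengthened and relative injective by Proposition \ref{the-two-isomorphic-resolutions}. Applying $\Coind$ gives a complex $0\To\Coind(E)\To\Coind(I^\bullet)$ of continuous $\mfg$-modules, and I claim it is again a strengthened, relative injective resolution, now of $\Coind(E)$. Exactness and strengthening follow from the product description: a strengthened resolution is contractible through continuous linear maps (Proposition \ref{strongly-exact-gives-contractible}), and $\Coind\cong\prod_b(\cdot)$ preserves such homotopies --- concretely, a continuous linear splitting $s$ of an injection $\iota$ is carried to the continuous linear map $\tilde s(f)(X^b):=s(f(X^b))$, extended $\Uh$-linearly, which splits $\Coind(\iota)$. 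Relative injectivity of each $\Coind(I^k)$ is the coinduction adjunction $\Hom_{\mfg}(A,\Coind(I))\cong\Hom_{\mfh}(\res A, I)$, $g\mapsto\ev_1\circ g$: given a strengthened injection $\iota\colon A\To B$ of $\mfg$-modules and a $\mfg$-morphism $A\To\Coind(I)$, one restricts to $\mfh$-modules (a continuous linear splitting is trivially $\{0\}$-equivariant, so $\res\iota$ is again strengthened), extends using relative injectivity of $I$, and transports the extension back across the adjunction; continuity of the resulting map is immediate from the pointwise topology.

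The heart of the matter is the identification of invariants. A direct computation gives a natural topological isomorphism $\Coind(M)^{\mfg}\cong M^{\mfh}$: an $f\in\Hom_{\Uh}(\Ug,M)$ is $\mfg$-invariant iff $f(uX)=0$ for all $u\in\Ug$, $X\in\mfg$, i.e.\ iff $f$ vanishes on the augmentation ideal $\ker\eta=\Ug\cdot\mfg$; combined with $\Uh$-linearity this forces $f(1)\in M^{\mfh}$, and conversely $\xi\in M^{\mfh}$ determines the invariant element $u\mapsto\eta(u)\xi$. The maps $f\mapsto f(1)$ and $\xi\mapsto\eta(\cdot)\xi$ are mutually inverse, continuous for the topology of pointwise convergence, and natural in $M$. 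Since the coboundary maps of the resolution are $\mfh$-morphisms, naturality makes the comparison squares commute, yielding an isomorphism of complexes of topological vector spaces $\big(\Coind(I^\bullet)^{\mfg}\big)\cong\big((I^\bullet)^{\mfh}\big)$. Passing to cohomology and invoking the invariance of the cohomology (up to linear homeomorphism) under the choice of resolution from Appendix \ref{rel-hom-alg-appendix} gives $\Cohom^n(\mfg,\Coind(E))\cong H^n\big((I^\bullet)^{\mfh}\big)=\Cohom^n(\mfh,E)$.

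I expect the main obstacle to be the bookkeeping in the second paragraph: verifying that $\Coind$ sends strengthened, relative injective resolutions to strengthened, relative injective resolutions while keeping every splitting and every extension continuous. The purely algebraic content is classical, but because the morphism spaces carry the topology of pointwise convergence one must check at each step --- the product decomposition, the splitting $\tilde s$, and the adjunction isomorphism --- that continuity is preserved, and this is precisely where the finite-dimensionality and pointwise-convergence lemmas of the Morphism spaces subsection do the real work.
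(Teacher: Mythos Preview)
Your argument is correct and complete, but it takes a different route than the paper. You run the conceptual Shapiro argument: show that $\Coind$ is exact (via the PBW product decomposition and transport of contracting homotopies), that it carries relative injective $\mfh$-modules to relative injective $\mfg$-modules (via the adjunction $\Hom_{\mfg}(A,\Coind(I))\cong\Hom_{\mfh}(\res A,I)$), and that $\Coind(M)^{\mfg}\cong M^{\mfh}$ naturally; then apply $\Coind$ to the standard resolution of $E$ as an $\mfh$-module. The paper instead works with two explicit complexes built from the \emph{same} data $\Ug\tens\wedge^\bullet\mfg$: on the $\mfg$-side the twisted standard resolution of $\Coind(E)$, and on the $\mfh$-side the complex $\Hom(\Ug\tens\wedge^\bullet\mfg,E)$ viewed as $\mfh$-modules, which it proves directly (using PBW) to be a strengthened relative injective resolution of $E$. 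The isomorphism at the level of invariants is then the completely explicit pair $\alpha_n(f)(u,\underline{X})=f(u,\underline{X})(1)$ and $\beta_n(f)(u,\underline{X})(v)=f(vu,\underline{X})$.

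Your approach buys modularity: the three properties of $\Coind$ are reusable and the proof is structurally identical to the group-theoretic Shapiro lemma. The paper's approach buys transparency of the topology: because both complexes have terms of the form $\Hom(\Ug\tens\wedge^n\mfg,-)$, the comparison maps are evaluation and precomposition, whose continuity in the pointwise topology is immediate, so no separate verification of a topological adjunction is needed. Either way PBW does the essential work; the difference is whether you package it as ``$\Coind$ is a product'' or as ``$\Hom(\Ug\tens\wedge^n\mfg,E)$ is $\mfh$-relative injective''.
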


\begin{proof}
Consider first the twisted standard resolution of $\Coind(E)$; i.e.~the complex
\begin{align}\label{coind-reso}
&0\To \Coind(E) \To  \Hom\left(\Ug,\Coind(E)\right) \To \Hom\left(\Ug\tens \mfg, \Coind(E)\right) \To  \cdots
\end{align}
with $\mfg$-action and coboundary maps defined as in \eqref{tilde-g-action} and \eqref{tilde-coboundary-map}. Consider also the complex of topological $\mfh$-modules
\begin{align}\label{shapiro-h-reso}
&0\To E \To  \Hom(\Ug,E) \To \Hom(\Ug\tens \mfg, E) \To  \Hom(\Ug\tens \mfg\wedge \mfg, E)\To  \cdots
\end{align}
where $\mfh$ acts on $\Hom(\Ug\tens \wedge^p \mfg, E)$ as in the formula \eqref{tilde-g-action} and the coboundary maps are defined by \eqref{tilde-coboundary-map}. 
\begin{claim*}
The complex is \eqref{shapiro-h-reso} a strengthened, relative injective resolution of the $\mfh$-module $E$.
\end{claim*}
\begin{proof}[Proof of Claim.] From Proposition \ref{the-two-isomorphic-resolutions}, and its proof, we already know that the complex \eqref{shapiro-h-reso} is exact and strengthened, 
so what remains to be shown is that $\Hom(\Ug\tens \wedge^p \mfg, E)$ is a relative injective $\mfh$-module. For this, choose a linear basis $Y_1,\dots, Y_p$ for $\mfh$ and extend it by vectors $X_1,\dots, X_q$ to a linear basis for $\mfg$. By the Poincaré-Birkoff-Witt theorem, the vectors $X_1^{s_1}\cdots X_{q}^{s_q} Y_1^{r_1}\dots Y_p^{r_p}$ where $r_i,s_i\in \NN_0$ 
is a linear basis for $\Ug$, and for each multi-index $\mathbf{t}=(t_1,\dots, t_q)\in \NN_0^q$ we define $p_{\mathbf{t}}\colon \Ug\to \Uh$ by setting
\[
p_\mathbf{t}(X_1^{s_1}\cdots X_{q}^{s_q} Y_1^{r_1}\dots Y_p^{r_p}):= \begin{cases}Y_1^{r_1}\dots Y_p^{r_p}& \mbox{if } \mathbf{t}=(s_1,\dots, s_q) \\
0 & \mbox{otherwise}  \end{cases} 
\]
Defining $X^{\mathbf{t}}:=X_1^{t_1}\dots X_q^{t_q}$, every $u\in \Ug$ can be written as $u=\sum_{\mathbf{t}\in \NN_0^q}  X^{\mathbf{t}} p_{\mathbf{t}}(u)$, 
with only finitely many non-zero summands, and each of the maps $p_{\mathbf{t}}$ is left $\Uh$-linear by construction and continuous since $\Uh$ is locally convex.
Consider now the setup of relative injectivity; i.e.~two continuous $\mfh$-modules $A,B$, a strengthened injective $\mfh$-morphism $\iota\colon A\to B$ and a $\mfh$-morphism $f\colon A\to \Hom(\Ug\tens \wedge^n\mfg,E)$. Choose a continuous left inverse $s\colon B\to A$ of $\iota$ and define $\tilde{f}\colon B\to \Hom(\Ug\tens \wedge^n\mfg,E)$ by setting
\[
\tilde{f}(u,Z_1,\dots, Z_n):=\sum_{\mathbf{t}\in \NN_0^q} f\left(s(p_{\mathbf{t}}(u).b)\right)(X^{\mathbf{t}}, Z_1,\dots, Z_n).
\]
Then a direct computation shows that $\tilde{f}\circ \iota=f$ and that $\tilde{f}$ is $\mfh$-linear, and since $f, p_{\mathbf{t}}$ and $s$ are continuous so is $\tilde{f}$. 
\end{proof}
Passing to $\mfg$- and $\mfh$-invariants in \eqref{coind-reso} and \eqref{shapiro-h-reso}, respectively, 
we therefore have that the induced complexes
\begin{align*}
&0 \To  \Hom(\Ug,\Coind(E))^\mfg \To \Hom(\Ug\tens \mfg, \Coind(E))^\mfg \To  \cdots\\
&0\To  \Hom(\Ug,E)^\mfh \To \Hom(\Ug\tens \mfg, E)^\mfh \To  \cdots
\end{align*}
compute $\Cohom^\bullet(\mfg, \Coind(E))$ and $\Cohom^\bullet(\mfh, E)$, respectively. Fix an $n\in \NN$ and consider the map
\[
\alpha_n\colon \Hom(\Ug\tens \wedge^n \mfg, \Coind(E))^\mfg \To \Hom(\Ug \tens \wedge^n\mfg, E)^\mfh
\]
given by $\alpha_n(f)(u,X_1,\dots, X_n):=f(u,X_1,\dots, X_n)(1)$. A direct computation shows that $\alpha_n$ is indeed well defined, i.e.~takes values in $\Hom(\Ug \tens \wedge^n\mfg, E)^\mfh$, and that the map
\[
\beta_n\colon   \Hom(\Ug \tens \wedge^n\mfg, E)^\mfh \To  \Hom(\Ug\tens \wedge^n \mfg, \Coind(E))^\mfg
\]
given by $\beta_n (f) (u,X_1,\dots, X_n) (v):= f(vu,X_1,\dots, X_n)$ is inverse to $\alpha_n$.  
Furthermore, it is straight forward to see that both $\alpha_n$ and $\beta_n$ are continuous maps which commute with the coboundary maps, and hence they induce mutually  inverse topological isomorphisms  between $\Cohom^n(\mfg, \Coind(E))$ and $\Cohom^n(\mfh, E)$.
\end{proof}


\section{The van Est theorem}\label{top-van-est-thm-section}
In this section we connect the cohomology of a Lie group  with the cohomology of its Lie algebra. We will assume familiarity with the basics on cohomology for locally compact groups, and everything needed can be found in \cite{guichardet-book} or \cite{borel-wallach}. We briefly recall, that this theory is developed along the same lines as the cohomology theory for Lie algebras presented in Appendix \ref{rel-hom-alg-appendix}, with the objects now being topological modules for a locally compact group $G$; more precisely, one first checks that every topological $G$-module $E$ has a relative injective, strengthened resolution (within the category of topological $G$-modules) and defines the cohomology $\Cohom^n(G,E)$ as the cohomology of the complex obtained by passing to $G$-invariants in such a strengthened injective resolution. The arguments in Appendix \ref{rel-hom-alg-appendix}, which show that the Lie algebra cohomology is well defined as a topological object, can be applied, mutatis mutandis, to show that $\Cohom^n(G,E)$ is well defined as a (generally non-Hausdorff) topological vector space and one then proceeds to define the reduced cohomology $\underline{\Cohom}^n(G,E)$ as $\Cohom^n(G,E)$ modulo the closure of the zero class. When the $G$-module $E$ in question has pleasant features (e.g.~quasi completeness, local convexity, completeness, Fr{\'e}chet etc.) the standard resolution (see \cite[Chapter III]{guichardet-book}) inherits these properties and one may then restrict attention to the class of modules having this property. More precisely, if $E$ is a, say, complete topological $G$-module then the cohomology $\Cohom^n(G,E)$ can be computed using any strengthened resolution consisting of complete topological $G$-modules which are relative injective \emph{within} the category of complete topological $G$-modules.  The importance of being able to pass to a smaller class of  $G$-modules lies in the fact that the extra structure possessed by these modules often makes it easier to prove relative injectivity  --- for a concrete instance of this phenomenon, see the proof of relative injectivity of the space of vector valued differential forms below. \\

Let now $G$ be a connected Lie group with Lie algebra  $\mfg$ (see e.g.\cite{warner} for an introduction to these notions) 
and let $E$ be a complete, locally convex $G$-module. Recall that a vector $\xi \in E$ is called \emph{smooth}  if the function $G\ni g\mapsto g.\xi\in E$ is smooth and that
\[
X.\xi:= \lim_{t\to 0}\frac{ \exp(tX).\xi-\xi }{t}, \ X\in \mfg,
\]
defines an action of the Lie algebra $\mfg$ on the space $E^\infty$ of smooth vectors. Furthermore,  $X\in \mfg$  acts as a continuous operator   when $E^\infty$ is endowed with the smooth topology arising from the embedding $j\colon E^\infty \to C^\infty(G,E)$ (see Appendix \ref{smooth-top-appendix}  for details), and hence $E^\infty$ is a topological $\mfg$-module \cite[Section 4.4]{warner-harmonic-analysis-I}.  The $G$-module $E$ is said to be smooth if every $\xi \in E$ is smooth and the mapping $j\colon E \to C^\infty(M,E)$ is a homeomorphism onto its image. Note that the map $j$ is always continuous, so if $E$ is Fr{\'e}chet then the open mapping theorem implies that $j$ is automatically a homeomorphism once $E=E^\infty$.
In the following,  we will restrict attention to the case where $E$ is Fr{\'e}chet  in order to have the open mapping theorem at our disposal. To prove the van Est theorem below we will need the following lemma; which is actually nothing but the statement of the theorem in degree 0.

\begin{lem}\label{G-fix-lig-g-fix}
If $G$ is a connected Lie group and $E$ is a smooth Frechet $G$-module then $E^G=E^\mfg$.
\end{lem}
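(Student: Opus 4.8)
The plan is to prove the two inclusions $E^G\subseteq E^{\mfg}$ and $E^{\mfg}\subseteq E^G$ separately. The first is immediate: if $\xi\in E^G$, then $\exp(tX).\xi=\xi$ for every $t\in\RR$ and every $X\in\mfg$, so the difference quotient defining $X.\xi$ is identically zero, whence $X.\xi=0$ and $\xi\in E^{\mfg}$. All the content is in the reverse inclusion.

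For the reverse inclusion, fix $\xi\in E^{\mfg}$ and $X\in\mfg$, and consider the curve $c\colon\RR\to E$ given by $c(t):=\exp(tX).\xi$. Since $\xi$ is a smooth vector and the embedding $j\colon E\to C^\infty(G,E)$ is a homeomorphism onto its image, $c$ is a smooth $E$-valued curve. Because $\exp(sX)$ and $\exp(t_0X)$ commute and $\exp(t_0X)$ acts as a continuous linear operator, one computes from the definition of the Lie algebra action that
\[
c'(t_0)=X.\bigl(\exp(t_0X).\xi\bigr)=\exp(t_0X).(X.\xi)=0,
\]
using $X.\xi=0$ in the last step. Thus $c$ is a smooth curve with identically vanishing derivative.

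The key remaining point is that such a curve is constant, and this is where the Fr{\'e}chet (in particular locally convex and Hausdorff) hypothesis on $E$ enters. I would reduce to the scalar case via Hahn--Banach: for any continuous linear functional $\phi\in E^*$, the real-valued function $t\mapsto\phi(c(t))$ is smooth with derivative $\phi(c'(t))=0$, hence constant, so $\phi(c(t))=\phi(c(0))=\phi(\xi)$ for all $t$. Since the continuous linear functionals separate the points of a locally convex Hausdorff space, this forces $c(t)=\xi$ for all $t$; in particular $\exp(X).\xi=\xi$ for every $X\in\mfg$. Finally, since $G$ is connected and $\exp$ is a local diffeomorphism near $0\in\mfg$, the set $\exp(\mfg)$ contains an identity neighbourhood and therefore generates $G$, so every $g\in G$ is a finite product $g=\exp(X_1)\cdots\exp(X_k)$; applying the previous step repeatedly gives $g.\xi=\xi$, so $\xi\in E^G$.

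The main obstacle is precisely the passage from ``zero derivative'' to ``constant'' for an $E$-valued curve, which is not as automatic in the infinite-dimensional Fr{\'e}chet setting as it is in $\RR^n$; the clean resolution is the Hahn--Banach reduction to scalar-valued functions described above, which is exactly where local convexity and the separation of points are used. The remaining ingredients---smoothness of the curve, continuity of the operators $\exp(t_0X)$, and the generation of the connected group $G$ by $\exp(\mfg)$---are standard and require only routine verification.
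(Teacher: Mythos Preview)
Your proof is correct and follows essentially the same approach as the paper: both arguments pair the orbit map with a continuous linear functional, differentiate along one-parameter subgroups to see the derivative vanish, and then invoke separation of points by $E^*$. The only organizational difference is that the paper works with the full scalar function $g\mapsto\eta(g.\xi)$ on $G$ and shows all its partials vanish in exponential coordinates (hence it is constant on the connected manifold $G$), whereas you argue one-parameter subgroup at a time and then use that $\exp(\mfg)$ generates $G$; these are equivalent packagings of the same idea.
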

\begin{proof}
If $\xi\in E^G$ then clearly $X.\xi=0$ for all $X\in \mfg$,  so $\xi\in E^\mfg$. Conversely, assume that $\xi\in E^{\mfg}$  and let $\eta\in E^*$ (the dual of $E$)  be given and consider the smooth function $f(g):=\eta(g.\xi)$. Choose a basis $X_1,\dots, X_n$ for $\mfg$ and an open zero-neighbourhood $V\subset \RR^n$ on which the map
$
(t_1,\dots, t_n) \mapsto \exp\left(\sum_i t_iX_i\right)
$
is (the inverse of) a coordinate system \cite[Theorem 3.31]{warner}.  
For a given $g_0\in G$, the function
$
V\ni (t_1,\dots, t_n) {\longmapsto} g_0\exp\left(\sum_i t_iX_i \right) \in G
$
defines coordinates $(x_1,\dots, x_n)$ around $g_0$ and computing the derivatives of $f$ in these coordinates we get:
\begin{align*}
\partafl{f}{x_i}\Big{\rvert}_{g_0} &= \lim_{h\to 0 }\frac{f\left(g_0\exp(hX_i)\right)- f(g_0)}{h} =\lim_{h\to 0}\frac{\eta\left(g_0.(\exp(hX_i).\xi- \xi)\right)}{h} =\\
&=(g_0^{-1}.\eta)\left(\lim_{h\to 0}\frac{\exp(hX_i).\xi-\xi}{h} \right)=(g_0^{-1}.\eta)(X_i.\xi)=0.
\end{align*}
Thus, all derivatives of $f$ vanish at all points and since $G$ is assumed connected, this means   that $f$ is constant \cite[Theorem 1.24]{warner}; that is,  $\eta(g.\xi)=\eta(\xi)$ for all $g\in G$. But since this holds for all $\eta\in E^*$ and $E^*$ separates points in $E$, we conclude that $g\xi=\xi$ for all $g\in G$, and hence that $\xi\in E^G$ as desired.
\end{proof}

\begin{thm}[Topological van Est theorem]\label{top-van-est-thm} If $G$ is a connected Lie group with Lie algebra $\mfg$, $K\leq G$ is a maximal compact subgroup with Lie algebra $\mfk$ and $E$ is a smooth Fr{\'e}chet $G$-module, then there exists  a bicontinuous linear isomorphism $\Cohom^n(G,E) \simeq \Cohom^n(\mfg, \mfk, E)$ for each $n\in \NN_0$.
\end{thm}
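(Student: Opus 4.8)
The plan is to construct a single complex of topological $G$-modules that computes both cohomologies, namely the de Rham complex of $E$-valued differential forms on the homogeneous space $X:=G/K$. Since $K$ is maximal compact in the connected group $G$, the space $X$ is diffeomorphic to a Euclidean space and the isotropy representation identifies $T_{eK}X$ with $\mfg/\mfk$. I would consider
\[
0\To E \overset{\varps}{\To} \Omega^0(X,E) \overset{d}{\To} \Omega^1(X,E) \overset{d}{\To} \cdots,
\]
where $\Omega^q(X,E)$ is the space of smooth $E$-valued $q$-forms on $X$ --- equivalently, the smooth sections of the associated bundle $G\times_K\big(\wedge^q(\mfg/\mfk)^*\tens E\big)$ --- carrying the $G$-action induced from left translation on $X$ together with the given action on $E$, with $\varps$ sending a vector to the corresponding constant form and $d$ the exterior derivative. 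As $E$ is Fr\'echet and $X$ is a finite-dimensional manifold, each $\Omega^q(X,E)$ is a Fr\'echet $G$-module, so the argument stays inside the well-behaved category in which the uniqueness result of Appendix \ref{rel-hom-alg-appendix} (valid mutatis mutandis for groups) applies.

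First I would verify that this complex is a \emph{strengthened} resolution of $E$. Exactness is precisely the $E$-valued Poincar\'e lemma, which is available because $X$ is diffeomorphic to Euclidean space; the usual contracting homotopy obtained by integrating along radial rays from the basepoint is continuous and linear, and a continuous linear contracting homotopy is exactly what ``strengthened with respect to the trivial subgroup'' demands.

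The main obstacle is the second point: each $\Omega^q(X,E)$ must be shown to be relative injective in the category of (complete, locally convex) topological $G$-modules, with continuous extension maps. I would establish this by the same extension-formula technique used for $E^n$ in Proposition \ref{the-two-isomorphic-resolutions} and for $\Hom(\Ug\tens\wedge^p\mfg,E)$ in the Shapiro lemma. Namely, for $W:=\wedge^q(\mfg/\mfk)^*\tens E$ the smooth coinduced module $C^\infty(G,W)$ is relative injective: given a strengthened injection $\iota\colon A\to B$ with continuous splitting $s$ and a $G$-morphism $f\colon A\to C^\infty(G,W)$, the formula $\tilde f(b)(g):=f(s(g.b))(e)$ produces a continuous $G$-equivariant extension, continuity following from that of $s$ and of the $G$-action exactly as before. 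Since $\Omega^q(X,E)$ is realized inside $C^\infty(G,W)$ as the range of the $G$-equivariant projection given by averaging over the compact group $K$ against normalized Haar measure, it is a $G$-equivariant retract of a relative injective module and hence itself relative injective.

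Granting these two points, the forms complex is a strengthened, relative injective resolution of $E$, so $\Cohom^n(G,E)$ is computed, as a topological vector space, by the complex of $G$-invariants $(\Omega^\bullet(X,E)^G, d)$. It remains to identify this with the inhomogeneous Lie algebra cochain complex of Proposition \ref{inhomogeneous-complex-prop}. Evaluation at the basepoint $eK$ is a bicontinuous linear isomorphism $\Omega^q(X,E)^G \simeq \Hom_\mfk(\wedge^q\mfg/\mfk,E)$: a $G$-invariant form is determined by its value at $eK$, and invariance of that value under the isotropy group $K$ translates --- using that a maximal compact subgroup of a connected Lie group is connected --- into the infinitesimal $\mfk$-equivariance \eqref{h-equiv-inhom}. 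The classical Chevalley--Eilenberg formula for the exterior derivative of an invariant form shows that $d$ becomes the coboundary \eqref{inhomogen-coboundary-map}, so the two complexes coincide. Consequently both $\Cohom^n(G,E)$ and $\Cohom^n(\mfg,\mfk,E)$ equal the cohomology of one and the same complex of topological vector spaces, which yields the desired bicontinuous isomorphism; in degree zero it recovers the equality $E^G=E^\mfg$ of Lemma \ref{G-fix-lig-g-fix}.
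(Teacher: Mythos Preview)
Your proposal is correct and follows essentially the same route as the paper: both use the $E$-valued de Rham complex on $G/K$, establish it as a strengthened relative injective resolution via the Poincar\'e lemma and a $K$-averaging argument, and then identify the $G$-invariants with the inhomogeneous Lie algebra cochains by evaluating at the basepoint and invoking connectedness of $K$. The only cosmetic difference is in the injectivity step: the paper averages the splitting $s$ over $K$ and writes down the extension $\tilde f(b)\restriction_{\bar g}=f(g.\tilde s(g^{-1}.b))\restriction_{\bar g}$ directly on $\Omega^p(M,E)$, whereas you first prove $C^\infty(G,W)$ relative injective and then pass to $\Omega^q(X,E)$ as a $G$-equivariant $K$-averaged retract---the same averaging, differently packaged.
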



As indicated multiple times already, this theorem goes back to van Est \cite{van-est} (see also \cite{hochschild-mostow, mostow}). As a precursor to the topological part of statement in Theorem \ref{top-van-est-thm}, we remark the following  which was noted already in \cite{hochschild-mostow}:  when $E$ is finite dimensional, then clearly $\Cohom^n(\mfg,\mfk, E)$ is finite dimensional as well, and hence, by the algebraic van Est theorem,  so is $\Cohom^n(G,E)$. In this case, by \cite[III, Proposition 2.4]{guichardet-book} the cohomology $\Cohom^n(G,E)$ is automatically Hausdorff and hence both sides of the van Est isomorphism are finite dimensional, locally convex, Hausdorff topological vector spaces and thus \emph{any} linear isomorphism is automatically a homeomorphism. To prove the general statement in Theorem \ref{top-van-est-thm}  we follow the strategy of proof presented in \cite{guichardet-book}, keeping track of the topologies.\\
\begin{proof}
Since $K$ is closed, the quotient space $M:=G/K$ is again a manifold upon which $G$ acts by diffeomorphisms via the maps induced by left translation.
That is, for each $g\in G$ we get a diffeomorphism $l_g\colon M\to M$ given by $l_g(\bar{x})=\overline{gx}=g.\bar{x}$ (where $\bar{x}$ denotes the class in $M$ of $x\in G$) and we therefore get induced isomorphisms $\lambda_g\restriction_m:= dl_g\restriction_m\colon T_mM \to T_{g.m}M$. Note that $\lambda_{k}\restriction_{\bar{e}}$ maps $T_{\bar{e}}M$ to $T_{\bar{e}}M$ whenever $k\in K$ and one easily shows that this defines an action $\lambda$ of $K$ on $T_{\bar{e}}M$. 
The manifold structure on $M$ is defined such that the natural quotient map $p\colon G\to M$ is a submersion (see eg.~\cite[Theorem 3.58]{warner}) and hence the induced map $\pi:=dp\colon \mfg \to T_{\bar{e}}M$ is a surjection. Furthermore, one sees that $\pi(X)=0$ whenever $X\in \mfk$ and hence the induced map $\bar{\pi}\colon \mfg/\mfk \to T_{\bar{e}}M$ is a linear isomorphism by dimension count.
\begin{claim}
The map ${\pi}$ intertwines the $\Ad$-action of $K$ on $\mfg$ with the $\lambda$-action of $K$ on $T_{\bar{e}}M$. 
\end{claim}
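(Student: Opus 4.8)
The plan is to lift the desired relation to the level of the group maps $p$, $c_k$ and $l_k$ and then differentiate at the identity. For $k\in K$ let $c_k\colon G\to G$ denote conjugation, $c_k(g)=kgk^{-1}$, so that $\Ad(k)=dc_k\restriction_e$ by definition, and recall that $\lambda_k\restriction_{\bar{e}}=dl_k\restriction_{\bar{e}}$ and $\pi=dp\restriction_e$.

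First I would record the key group-level identity $p\circ c_k=l_k\circ p$. To see this, fix $g\in G$ and compute $p(c_k(g))=p(kgk^{-1})=kgk^{-1}K$; since $k\in K$ we have $k^{-1}K=K$, whence $kgk^{-1}K=kgK=l_k(\bar{g})=l_k(p(g))$. Thus $p$ intertwines conjugation by $k$ on $G$ with left translation by $k$ on $M$.

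With this in hand the claim follows by differentiating at $e$. Applying $d(\cdot)\restriction_e$ to both sides of $p\circ c_k=l_k\circ p$ and using the chain rule, together with $c_k(e)=e$ and $p(e)=\bar{e}$, yields
\[
\pi\circ\Ad(k)=dp\restriction_e\circ dc_k\restriction_e=d(l_k\circ p)\restriction_e=dl_k\restriction_{\bar{e}}\circ dp\restriction_e=\lambda_k\restriction_{\bar{e}}\circ\pi,
\]
which is exactly the intertwining property to be proved. The only genuinely substantive step is the group-level identity, which rests solely on $K$ being a subgroup (so that $k^{-1}K=K$); the differentiation afterwards is completely routine, so I expect no real obstacle here.
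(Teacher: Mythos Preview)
Your proof is correct and follows essentially the same idea as the paper's: both rest on the observation that $k^{-1}K=K$, which at the group level gives $p\circ c_k=l_k\circ p$, and then one differentiates at $e$. The paper carries this out by evaluating on a test function via the curve $t\mapsto\exp(tX)$ and the identity $a_k\circ\exp=\exp\circ\Ad_k$, whereas you state the group-level identity first and invoke the chain rule directly; this is a cleaner packaging of the same argument.
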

\begin{proof}[Proof of Claim 1] Denote by $a$  the conjugation action $a_g(x):=gxg^{-1}$ of $G$ on itself and by $\Ad$ the induced action on $\mfg$ given by
$
\Ad_g:=da_g\restriction_{T_eG}\colon \mfg \to \mfg.  
$ 
Then $a\circ \exp= \exp\circ \Ad$ \cite[3.46]{warner}, and for $k\in K$ and $f\in C^\infty(M)$ we therefore have
\begin{align*}
\pi\left(\Ad_k(X)\right)(f)&= \Ad_k(X)(f\circ p) = \frac{d}{dt}\Big{\rvert}_{t=0} f\circ p\left(\exp(t \Ad_k(X))\right)=\\
& = \frac{d}{dt}\Big{\rvert}_{t=0} (f\circ p)(a_k \circ \exp(tX))=\lim_{t\to 0} \frac{f\left(k.\overline{\exp(tX)k^{-1}}\right)- f(\bar{e})}{t}=\\
&=\lim_{t\to 0} \frac{f\left(k.\overline{\exp(tX)}\right)- f(\bar{e})}{t}=\lambda_k(\pi(X))(f).
\end{align*}
\end{proof}
Let $G$ act on $C^\infty(M)$ by left translation, on the smooth vector fields  $\Vect(M)$ as $(g.X)(f):= g(X(g^{-1}f))$ and on  the space $\Omega^p(M,E)$ of $E$-valued $p$-forms (see Appendix \ref{smooth-top-appendix} for definitions) by
\[
(g.\omega)(X_1,\dots, X_n):= g.\omega(g^{-1}X_1,\dots, g^{-1}X_p), \quad X_1,\dots, X_p\in \Vect(M).
\]
Localized in a point $m\in M$, the latter action is given by
\[
(g.\omega)_m:=g.(\omega_{g^{-1} m} \circ \wedge^p \lambda_{g^{-1}}\restriction_m (-)).
\]
We endow $\Omega^p(M,E)$ with the initial topology arising from the maps
\[
\Omega^p(M,E)\ni \omega \longmapsto \omega(X_1,\dots, X_p)\in C^\infty(M,E); \quad X_1,\dots, X_p\in \Vect(M)
\]
Since $C^\infty(M,E)$ is complete (in the smooth topology), $\Omega^p(M,E)$ is complete and locally convex in this topology and the $G$-action is continuous (one may actually prove that the action is smooth, but we shall not need this fact so we omit the details). 
\begin{claim}
The $G$-module $\Omega^p(M,E)$ is relative injective in the category of complete topological $G$-modules.
\end{claim}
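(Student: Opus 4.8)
The plan is to recognize $\Omega^p(M,E)$ as a module coinduced from the compact subgroup $K$ and then to exploit compactness of $K$ through Haar averaging, the completeness hypothesis being exactly what makes the relevant integral converge. Put $W:=\wedge^p(\mfg/\mfk)^*\tens E$; since $\mfg/\mfk$ is finite dimensional and $E$ is Fréchet, $W$ is a complete (Fréchet) topological $K$-module, where $K$ acts on the first factor through $\bar\pi$ and the intertwining property of $\pi$ established above (so, via $\wedge^p$ of the coadjoint action) and on $E$ by restriction of the $G$-action. Trivializing the associated bundle $G\times_K W$ along the orbit map $g\mapsto g.\bar e$ identifies $\Omega^p(M,E)$, as a $G$-module, with the space
\[
C^\infty(G,W)^K:=\bigl\{\phi\in C^\infty(G,W)\ \big|\ \phi(gk)=k^{-1}.\phi(g)\ \text{for all }g\in G,\ k\in K\bigr\},
\]
on which $G$ acts by left translation $(g.\phi)(x):=\phi(g^{-1}x)$, the $E$-twisting being absorbed into the trivialization $\phi(g)=g^{-1}.(\omega_{g.\bar e}\circ \wedge^p\lambda_{g})$. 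The first thing I would verify carefully is that this identification is a homeomorphism for the smooth topology introduced above and that the two $G$-actions correspond; this is the step through which the completeness recorded for $\Omega^p(M,E)$ is transported to $C^\infty(G,W)^K$.

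Granting the identification, consider the setup for relative injectivity: a strengthened injective $G$-morphism $\iota\colon A\to B$ of complete $G$-modules with continuous linear section $s\colon B\to A$ (so $s\circ\iota=\id_A$) and a $G$-morphism $f\colon A\to C^\infty(G,W)^K$. Evaluation at the identity $\ev_e\colon C^\infty(G,W)^K\to W$, $\phi\mapsto\phi(e)$, is continuous, linear and $K$-equivariant, so $f_0:=\ev_e\circ f\colon A\to W$ is a continuous $K$-morphism, and $\iota$ is a fortiori a strengthened injective $K$-morphism (the same section $s$ serves). This reduces the problem to extending $f_0$ over $K$, where compactness makes $W$ relative injective.

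Over the compact group $K$ the extension is produced by averaging against normalized Haar measure: set
\[
\tilde f_0(b):=\int_K k.f_0\bigl(s(k^{-1}.b)\bigr)\,dk\ \in\ W.
\]
The integrand is a continuous map from the compact space $K$ into the complete space $W$, so the integral exists and defines a continuous linear map $\tilde f_0\colon B\to W$; this is precisely the place where completeness of the target is used, as foreshadowed in the discussion above. Invariance of $dk$ yields $\tilde f_0(k_0.b)=k_0.\tilde f_0(b)$, and for $a\in A$ one gets, using $s\circ\iota=\id_A$ and the $K$-equivariance of $f_0$,
\[
\tilde f_0(\iota a)=\int_K k.f_0(k^{-1}.a)\,dk=\int_K k.k^{-1}.f_0(a)\,dk=f_0(a),
\]
so $\tilde f_0\circ\iota=f_0$.

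Finally I would reconstruct a $G$-morphism into $C^\infty(G,W)^K$ by the coinduction formula $\tilde f(b)(g):=\tilde f_0(g^{-1}.b)$. The $K$-equivariance of $\tilde f_0$ gives $\tilde f(b)(gk)=k^{-1}.\tilde f(b)(g)$, so $\tilde f(b)$ is $K$-equivariant, while its smoothness in $g$ follows by composing the smooth orbit map $g\mapsto g^{-1}.b$ (here one uses that $B$ is a smooth module, as are all modules relevant to the van Est comparison) with the continuous linear $\tilde f_0$. A direct substitution gives $G$-equivariance, $\tilde f(h.b)(g)=\tilde f_0(g^{-1}h.b)=\tilde f(b)(h^{-1}g)=(h.\tilde f(b))(g)$, and, using $G$-equivariance of $\iota$ together with $\tilde f_0\circ\iota=f_0$, the extension property
\[
\tilde f(\iota a)(g)=\tilde f_0\bigl(g^{-1}.\iota a\bigr)=f_0(g^{-1}.a)=\bigl(g^{-1}.f(a)\bigr)(e)=f(a)(g),
\]
so $\tilde f\circ\iota=f$; continuity of $\tilde f$ is immediate from that of $\tilde f_0$ and of the action. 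Transporting $\tilde f$ back through the identification of the first paragraph yields the required extension into $\Omega^p(M,E)$. I expect the main obstacle to be the topological identification $\Omega^p(M,E)\cong C^\infty(G,W)^K$ — checking that it is a homeomorphism matching the $G$-actions, and that the reconstructed section is genuinely smooth — whereas the averaging and the equivariance computations are routine once completeness guarantees that the $K$-integral lands in $W$.
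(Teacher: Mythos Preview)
Your approach is correct and closely related to the paper's, but packaged differently. The paper works directly with $\Omega^p(M,E)$: given the section $s\colon B\to A$, it first Haar-averages $s$ over $K$ to obtain a $K$-equivariant section $\tilde s(b):=\int_K k.s(k^{-1}.b)\,dk$ (this is where completeness enters, now of $A$ rather than of $W$), and then defines the extension pointwise on $M$ by $\tilde f(b)\restriction_{\bar g}:=f\bigl(g.\tilde s(g^{-1}.b)\bigr)\restriction_{\bar g}$, the $K$-equivariance of $\tilde s$ being exactly what makes this independent of the representative $g$. You instead recognize $\Omega^p(M,E)$ as the coinduced module $C^\infty(G,W)^K$, reduce via $\ev_e$ to an extension problem over the compact group $K$, average there, and coinduce back. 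The core device---Haar averaging over $K$---is identical; you average $f_0\circ s$ while the paper averages $s$, but unwinding shows the two extensions agree. Your route is more conceptual, exhibiting the result as an instance of ``coinduced from a compact subgroup is relative injective'', at the price of first having to pin down the topological $G$-isomorphism $\Omega^p(M,E)\cong C^\infty(G,W)^K$; the paper never writes this identification down in full (it only needs the analogous one at the level of $G$-invariants, via the maps $\alpha,\beta$ later in the proof). Your closing remark correctly flags this identification and the smoothness of the reconstructed map as the nontrivial points; the paper sidesteps the first by staying on the $\Omega^p$ side throughout, and is equally terse about the second.
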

\begin{proof}[Proof of Claim 2.]
Consider the  usual setup for relative injectivity:
\[
\xymatrix{
0 \ar[r] & A \ar[d]_f \ar[r]_\iota & B  \ar@/_/@{->}[l]_{s} \ar@{-->}[dl]^{\exists? \tilde{f}}  \\
 & \Omega^p(M,E) &
}
\]
where $s\colon B\to A$ is continuous with $s\circ \iota=\id_A$. Fix a $b\in B$. The map $\gamma\colon K\to A$ given by $\gamma(k)=k.s(k^{-1}.b)$ is continuous and since $A$ is assumed complete (actually, quasi-completeness suffices) we may therefore integrate $\gamma$ as a function into $A$ \cite[VI6, Proposition 8]{bourbaki-integration} and define $\tilde{s}\colon B\to A$ by
\[
\tilde{s}(b):=\int_{k\in K} k.s(k^{-1}.b){d}k, 
\]
where we integrate against the Haar measure on $K$. Then $\tilde{s}$ is $K$-equivariant, continuous and still satisfies $\tilde{s}\circ \iota=\id_A$, and the map $\tilde{f}\colon B\to \Omega^p(M,E)$ is now defined as 
\[
\tilde{f}(b)\restriction_{\bar{g}}:= f\left(g.(\tilde{s}(g^{-1}.b))\right)\restriction_{\bar{g}}
\]
The $K$-equivariance of $\tilde{s}$ makes $\tilde{f}$ well-defined (i.e.~independent of the choice of representative $g$ for the class $\bar{g}$) and a straight forward calculation
shows that $\tilde{f}$ is $G$-linear and satisfies $\tilde{f}\circ \iota=f$. \end{proof}

We now define a complex
\begin{align}\label{omega-reso}
0\To E \overset{\varps}{\To} \Omega^0(M,E) \overset{\delta^0}{\To} \Omega^1(M,E) \overset{\delta^1}{\To} \Omega^2(M,E) \overset{\delta^2}{\To} \cdots
\end{align}
where $\varps(\xi)(m):=\xi$ and 
\begin{align}\label{exterior-derivative-def}
\delta^p(\omega)(X_1,\dots, X_{p+1})&:= \sum_{i=1}^{p+1}(-1)^{i+1} X_i.\omega(X_1,\dots, \hat{X_i}, \dots, X_{p+1}) + \notag \\
&+\sum_{i<j}(-1)^{i+j} \omega\left([X_i,X_j],X_1,\dots, \hat{X}_i,\dots, \hat{X}_j,\dots, X_{p+1} \right).
\end{align}
One may check that \eqref{omega-reso} is a complex and it is easy to see that the coboundary maps are $G$-equivariant and continuous, so that \eqref{omega-reso} is indeed a complex of continuous $G$-modules. Since $G$ is assumed connected and $K$ is a maximal compact subgroup, by the Malcev-Iwasawa theorem \cite[Theorem 5.11]{osborn-vector-bundles} there exists a $d\in \NN$ such that $G$ is diffeomorphic with $\RR^d\times K$, and hence $M$ is diffeomorphic with $\RR^d$. Since $E$ is assumed Fr{\'e}chet, the Poincaré lemma \cite[4.18]{warner} now generalizes to the case of vector valued forms and provides a continuous contracting homotopy of \eqref{omega-reso}. This shows that \eqref{omega-reso} is strengthened (see e.g.~Proposition \ref{strongly-exact-gives-contractible} and Remark \ref{converse-rem}), relative injective resolution of $E$ and hence $\Cohom^\bullet(G,E)$ can be computed using the complex
\begin{align*}
0\To \Omega^0(M,E)^G \overset{\delta^0\restriction}{\To} \Omega^1(M,E)^G \overset{\delta^1\restriction}{\To} \Omega^2(M,E)^G \overset{\delta^2\restriction}{\To} \cdots
\end{align*}
We now need to relate the cohomology of this complex to the cohomology of the Lie algebra $\mfg$. To this end, define a map
$
\alpha \colon \Omega^p(M,E)^G \To \Hom(\wedge^p T_{\bar{e}}, E)
$
by setting $\alpha(\omega)(X_1,\dots, X_p):=\omega_{\bar{e}}(X_1,\dots, X_p)$. On $\Hom(\wedge^p T_{\bar{e}}, E)$ we have a natural action of $K$ given by
\[
(k.\varphi)(X_1,\dots, X_p):=k.\varphi(\lambda_{k^{-1}}X_1,\dots, \lambda_{k^{-1}}X_p),
\]
and $\alpha$ takes values in the $K$-invariant elements, as is seen from the following computation:
\begin{align*}
(k.\alpha(\omega))(X_1,\dots, X_p) & := k\left(\omega_{\bar{e}}(\lambda_{k^{-1}}X_1,\dots, \lambda_{k^{-1}}X_p)\right)=(k.\omega)_{\bar{k}}(X_1,\dots, X_p)=\\
& =\omega_{\bar{e}}(X_1,\dots, X_p)=\alpha(\omega)(X_1,\dots, X_p).
\end{align*}
Now define a map $\beta\colon \Hom(\wedge^pT_{\bar{e}}, E)^K\to \Omega^p(M,E)^G$ by setting
\[
\beta(\varphi)_{\bar{g}}:= g.\left(\varphi \circ \wedge^p\lambda_{g^{-1}}\restriction_{T_{\bar{g}}M} (-)  \right)
\] 
(this is well-defined exactly because $\varphi$ is fixed by $K$). A direct computation now shows that $\alpha$ and $\beta$ are each others inverses, and we now prove that they are both continuous. Recall that a sequence $\omega_i\in \Omega^p(M,E)$ converges to zero iff $\omega_i(X_1,\dots, X_p)\to 0$ in $C^\infty(M,E)$ for all $X_1,\dots, X_p\in \Vect(M)$. If this is the case, then clearly
\[
\alpha(\omega_i)(X_1\restriction_{\bar{e}}, \dots, X_p\restriction_{\bar{e}})=\omega_i(X_1,\dots, X_p)(\bar{e})\underset{i\to \infty}{\To} 0;
\]
that is, $\alpha(\omega_i)$ converges pointwise to zero and hence the map $\alpha$ is continuous. Since $E$ assumed Fr{\'e}chet, both the domain and range of $\alpha$ is Fr{\'e}chet as well, and hence, by the open mapping theorem, $\beta$ is also continuous. 
Next, consider the space $\Hom(\wedge^p\mfg/\mfk, E)$ and endow it with the $K$-action defined by
\[
(k.\varphi)(\bar{X}_1,\dots, \bar{X}_{p}):= k.\varphi\left( \overline{\Ad_{k^{-1}}X_1}, \dots, \overline{\Ad_{k^{-1}}X_p}  \right)
\]
By Claim 1 above, precomposition with the map $\bar{\pi}\colon \mfg/\mfk\to T_{\bar{e}}M$ yields a (topological) isomorphism of $K$-modules $\gamma\colon \Hom(\wedge^p\mfg/\mfk, E)\To \Hom(\wedge^pT_{\bar{e}}, E)$ and hence we get an isomorphism
\[
\gamma^{-1}\circ \alpha \colon \Omega^p(M,E)^G \To \Hom(\wedge^p \mfg/\mfh, E)^K.
\]
Since the $\Ad$-action of $K$ on $\mfg$ differentiates to the $\ad$-action (given by $\ad_Y(X):=[Y,X]$)  of $\mfk$ on $\mfg$  \cite[Proposition 3.47]{warner}, a direct computation verifies that the $K$-module $\Hom(\wedge^p \mfg/\mfh, E)$ is smooth and that its $K$-action differentiates to the action given by
\[
(Y.\varphi)(\bar{X}_1,\dots, \bar{X}_p):=Y.\varphi(\bar{X}_1, \dots, \bar{X}_p) - \sum_{i=1}^p\varphi(\bar{X}_1,\dots, \overline{[Y,X_i]},\dots, \bar{X}_p), \quad Y\in \mfk;
\]
where $Y$ acts on the first summand by means of the differentiated action on $E$. Since $G$ is assumed connected and is diffeomorphic with $\RR^d\times K$,   $K$ is also connected and by  Lemma \ref{G-fix-lig-g-fix} the $K$-invariants and $\mfk$-invariants therefore agree;  we thus have that $\Hom(\wedge^p\mfg/\mfk, E)^\mfk$ is exactly the space of inhomogeneous $p$-cochains defined in Section \ref{inhomogen-cochains-section}. Chasing through the isomorphisms shows that $\gamma^{-1}\circ \alpha$ intertwines the coboundary map \eqref{exterior-derivative-def} with the inhomogeneous coboundary map \eqref{inhomogen-coboundary-map},  and the proof is complete.
\end{proof}


\begin{rem}
The first cohomology group $\Cohom^1(G, E)$, as well as its reduced ditto $\underline{\Cohom}^1(G,E)$, is of special importance in many applications, and we note here that the topological van Est theorem implies that if $E$ is a smooth Fr{\'e}chet $G$-module and $\underline{\Cohom}^1(G,E)=\{0\}$, then  also $\underline{\Cohom}^1(\mfg,\mfk, E)=\{0\}$. Hence, for any inhomogeneous $1$-cocycle $c\colon \mfg/\mfk \to E $ there exists a sequence $(\xi_i)_i$ in $E$ such that $c(\bar{X})=\lim_{i} X.\xi_i$. 
\end{rem}

\begin{rem}
By \cite[Theorem 5.2]{blanc}, whenever $G$ is a (connected) Lie group and $E$ is a continuous Fr{\'e}chet $G$-module, the inclusion $E^\infty \To E$ induces a topological isomorphism $\Cohom^n(G,E^\infty) \To \Cohom^n(G,E)$, so if $\underline{\Cohom}^n(G,E)$ vanishes then so does $\underline{\Cohom}^n(\mfg,\mfk, E^\infty)$. 
If $G$ is amenable and unimodular this is the case for all $n\geq 1$ when $E=L^2(G)$ \cite[Theorem C \& Porism 2.10]{KPV}, and in the special case $n=1$
it even holds for  any infinite-dimensional, irreducible, unitary Hilbert $G$-module $E$  (see \cite[Theorem 3.1]{florian-martin-reduced-1-cohomology}).

\end{rem}

\appendix

\section{Relative homological algebra}\label{rel-hom-alg-appendix}
In this appendix we work through the necessary relative homological algebra to justify that the relative Lie algebra cohomology is well-defined as a topological object. 
The exposition is in complete analogy with the theory developed for groups in \cite[Chapter 3]{guichardet-book} and is primarily included for the readers convenience.
In what follows,   $\mfg$ denotes a Lie algebra and $\mfh\leq \mfg$  a reductive Lie sub-algebra. 
We shall freely use the terminology introduced in Section \ref{top-lie-alg-comhom-section}; in particular the notion of $\mfh$-relative injective $\mfg$-modules and $\mfh$-strengthened maps will play a prominent role.
\begin{prop}\label{strongly-exact-gives-contractible}
If $0 \to E\overset{\varps}{\To}E^0\overset{d^0}{\To} E^1 \overset{d^1}{\To} \cdots$ is a $\mfh$-strengthened resolution of $E$ then it admits a contractive homotopy $(s^n)_{n\in \NN_0}$ consisting of continuous,  $\mfh$-linear maps. That is,  $s^n\colon E^n\to E^{n-1}$ is continuous and $\mfh$-linear (here $E^{-1}:=E$), 
\begin{align}\label{nul-homotopi}
 s^0\circ\varps=\id_E  \ \text{ and }  \ s^{l+1}\circ d^l +d^{l-1}\circ s^l=\id_{E^l} \text{ for all } l\in \NN.
\end{align}
\end{prop}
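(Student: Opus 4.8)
The plan is to extract the contracting homotopy directly from the definition of an $\mfh$-strengthened resolution, proceeding degree by degree and using the splitting maps furnished by the strengthening hypothesis. Recall that a $\mfh$-strengthened resolution is an exact complex in which each coboundary map $d^l$ (and the augmentation $\varps$) is $\mfh$-strengthened in the sense that both the inclusion of its kernel and the induced map from the coimage admit continuous $\mfh$-equivariant splittings. The first step is therefore to unpack, for each $l$, the data of these $\mfh$-linear continuous sections and to record the images and kernels: by exactness, $\im(d^{l-1})=\ker(d^l)$, so the complex decomposes at each stage into short exact sequences $0\to \ker(d^l)\to E^l \to \im(d^l)\to 0$, each of which is $\mfh$-split.

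The second step is to assemble these splittings into the maps $s^n\colon E^n\to E^{n-1}$. Concretely, for the augmentation $0\to E\overset{\varps}{\To}E^0$ the strengthening gives a continuous $\mfh$-linear $r\colon E^0\to E$ with $r\circ\varps=\id_E$; I set $s^0:=r$, which immediately yields $s^0\circ\varps=\id_E$. For the higher degrees I would use the $\mfh$-splitting of the inclusion $\ker(d^l)\hookrightarrow E^l$, say a continuous $\mfh$-linear projection $P^l\colon E^l\to \ker(d^l)=\im(d^{l-1})$, together with the $\mfh$-splitting $\sigma^{l-1}$ of the induced injection $E^{l-1}/\ker(d^{l-1})\to E^l$ through which $d^{l-1}$ factors. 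Composing these produces a continuous $\mfh$-linear map $s^l\colon E^l\to E^{l-1}$ designed so that $d^{l-1}\circ s^l$ acts as the identity on $\im(d^{l-1})=\ker(d^l)$. The homotopy identity $s^{l+1}\circ d^l + d^{l-1}\circ s^l=\id_{E^l}$ should then follow by writing any $x\in E^l$ as the sum of its $\ker(d^l)$-part, handled by $d^{l-1}\circ s^l$, and a complementary part mapped isomorphically by $d^l$ onto $\im(d^l)$, handled by $s^{l+1}\circ d^l$; this is precisely the standard argument that an exact complex with split inclusions is contractible, now carried out so that every map in sight is $\mfh$-equivariant and continuous.

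The main obstacle, and the point requiring genuine care, is bookkeeping the compatibility of the two families of splittings so that the telescoping identity holds on the nose rather than merely up to terms in $\ker$. Because the strengthening hypothesis splits a \emph{general} morphism by separately splitting $\ker(f)\hookrightarrow E$ and the induced $E/\ker(f)\to F$, I must combine the projection onto the kernel at level $l$ with the section of the coimage at level $l-1$ in a way that respects $\im(d^{l-1})=\ker(d^l)$; the delicate step is verifying that the composite correctly reproduces the identity on the kernel summand while the $s^{l+1}\circ d^l$ term supplies the identity on a chosen complement. Continuity and $\mfh$-linearity of each $s^n$ are then automatic, since they are built by composing the continuous $\mfh$-linear sections guaranteed by the definition of $\mfh$-strengthened with the continuous $\mfh$-linear coboundary maps. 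This is exactly the relative-homological-algebra analogue of the classical fact that a split exact sequence of $\mfh$-modules is contractible, and I would present it as such, emphasizing that the only novelty over the purely algebraic statement is the simultaneous preservation of the topologies, which holds because every constituent map is continuous by hypothesis.
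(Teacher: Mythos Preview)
Your approach is essentially the same as the paper's: both construct $s^l$ by composing the $\mfh$-linear projection $P^l\colon E^l\to \ker(d^l)$ (the paper's $\alpha^l$) with the section $\sigma^{l-1}$ of the induced coimage injection, lifted back to $E^{l-1}$ via the complementary projection $\id-P^{l-1}$---this last lift is the one step you leave implicit, and it is precisely the paper's map $\bar{g}^{l-1}$. With that ingredient made explicit, the verification of the homotopy identity proceeds exactly as you outline, and the paper's proof is just a cleaned-up version of your sketch.
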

\begin{proof}
The fact that  $s^0$ exists follows directly from the assumption that $\varps$ is strengthened, so let us now treat the general case.
By assumption, the inclusion $\iota^l\colon\ker(d^l)\to E_l$ has a continuous $\mfh$-linear left inverse $\alpha^l\colon E^l\to\ker(d^l)$. 
Now consider the map $g^l\colon E^l\to E^l$ given by $g^l(x):=x-\alpha^l(x)$ and notice that $g^l$ vanishes on $\ker(d^l)$, so that we get an induced map $\bar{g}^l\colon E^l/\ker(d^l)\to E^l$.  Now choose a continuous $\mfh$-linear left inverse $\sigma^l\colon E^{l+1}\to E^l/\ker(d^l) $ to the induced map $\bar{d}^l\colon E^l/\ker(d^l)\to E^{l+1} $ and note that the restriction $\sigma_0^l:=\sigma^l\restriction_{\im(d^l)} $ is a left inverse to the corestriction $\bar{d}^l_0\colon E^l/\ker(d^l)\to \im(d^l)$. Note also that since $\overline{d}^l_0$ is bijective, $\sigma^l_0$ is actually a two-sided inverse which we will use to prove \eqref{beta-computation} below. Now define $ \beta^{l+1}:=\bar{g}^{l}\circ \sigma_0^l \colon \im(d^l)\to E_l$, and note that we have the following identity:
\begin{align}\label{beta-computation}
d^{l-1}\circ \beta^l(y) &=d^{l-1} \circ \bar{g}^{l-1}\circ \underbrace{\sigma_0^{l-1}(y)}_{=:\bar{z}}=d^{l-1} \circ g^{l-1}(z)=\notag\\
&=d^{l-1}(z-\alpha^{l-1}(z))=\bar{d}^{l-1}_0(\bar{z})= \bar{d}_0^{l-1}\circ \sigma_0^{l-1}(y)=y.
\end{align}
Defining $s^l:=\beta^l\circ \alpha^l$ now does the job:
\begin{align*}
(s^{l+1}\circ d^l+d^{l-1}\circ s^l)(x) &= \beta^{l+1}\circ \alpha^{l+1}\circ d^l(x) + d^{l-1}\circ \beta^l\circ \alpha^l(x) \overset{\eqref{beta-computation}}{=} \beta^{l+1}\circ d^l(x) +\alpha(x)=\\
&=\beta^{l+1}\circ\bar{d}^l_0 (\bar{x})+\alpha(x) =\bar{g}^l\circ \sigma_0^l \circ \bar{d}^l_0(\bar{x})+\alpha(x)=\bar{g}^l(\bar{x}) +\alpha(x)=\\
&=x-\alpha_l(x) +\alpha(x)=x.
\end{align*}

\end{proof}

\begin{rem}\label{converse-rem}
Actually, the converse to Proposition \ref{strongly-exact-gives-contractible} also holds true, in the sense that any resolution $0 \to E \overset{\varps}{\to} E^0 \overset{d^0}{\To}E^1\overset{d^1}{\To}\cdots$ of $E$ within the category of topological $\mfg$-modules, be it $\mfh$-relative injective or not, which admits a family of continuous, $\mfh$-linear maps $(s_n)_{n\in \NN_0}$ satisfying \eqref{nul-homotopi} is $\mfh$-strengthened.  We leave the proof as an exercise.
\end{rem}

\begin{prop}\label{prop-fundamental-extension-result}
Consider a topological $\mfg$-module $E$ and a $\mfh$-strengthened resolution 
\begin{align}\label{E-complex}
\xymatrix{
0\ar[r] & E \ar[r]^{\varps}  & E^0\ar[r]^{d^0} \ar@/^0.7pc/[l]^{s^0} & E^1 \ar[r]^{d^1} \ar@/^0.7pc/[l]^{s^1} & E^2\ar[r]^{d^2} \ar@/^0.7pc/[l]^{s^2} & \cdots \ar@/^0.7pc/[l]^{s^3}
}
\end{align}
where the maps $s^n$ are the ones ensured by Proposition \ref{strongly-exact-gives-contractible}. Let $F$ be another topological $\mfg$-module and consider a complex 
\begin{align}\label{F-complex}
\xymatrix{
0\ar[r] & F \ar[r]^{\eta}  & F^0\ar[r]^{\delta^0} & F^1 \ar[r]^{\delta^1}  & F^2\ar[r]^{\delta^2}  & \cdots 
}
\end{align}
in which every $F^n$ is $\mfh$-relative injective. Then for any $\mfg$-morphism $u\colon E\to F$ there exists a morphism of complexes $u^\bullet\colon (E^\bullet,d^\bullet)\To (F^\bullet, \delta^\bullet)$  consisting of continuous $\mfg$-linear maps which lifts\footnote{meaning that if the morphism $u^\bullet$ is augmented by $u$ in degree -1, then we obtain a morphism of complexes from \eqref{E-complex} to \eqref{F-complex}.} $u$.  Moreover, any morphism of complexes $u^\bullet$ lifting the zero-morphism from $E$ to $F$ is chain homotopic to the zero-morphism through a homotopy consisting of continuous $\mfg$-linear maps.
\end{prop}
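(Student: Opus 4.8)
The plan is to run the classical comparison theorem of homological algebra, inducting on the degree and invoking the relative injectivity of the modules $F^n$ at each stage to manufacture the required continuous $\mfg$-linear maps; the only additional care concerns the topologies, which I track as I go.

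First I would construct the lift $u^\bullet$. For the base case, note that $\varps\colon E\To E^0$ is an $\mfh$-strengthened injective $\mfg$-morphism (being the augmentation of an $\mfh$-strengthened resolution) and that $\eta\circ u\colon E\To F^0$ is a $\mfg$-morphism; relative injectivity of $F^0$ then yields a continuous $\mfg$-linear map $u^0\colon E^0\To F^0$ with $u^0\circ\varps=\eta\circ u$. For the inductive step, assuming $u^0,\dots,u^n$ have been built with $\delta^{m}\circ u^{m}=u^{m+1}\circ d^{m}$ for $m<n$, I consider $\delta^n\circ u^n\colon E^n\To F^{n+1}$ and check that it vanishes on $\ker(d^n)$: by exactness of \eqref{E-complex} any $x\in\ker(d^n)$ equals $d^{n-1}(y)$, whence
\[
\delta^n\circ u^n\circ d^{n-1}(y)=\delta^n\circ\delta^{n-1}\circ u^{n-1}(y)=0
\]
using the established commutation and $\delta^n\circ\delta^{n-1}=0$. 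Thus $\delta^n\circ u^n$ descends to a continuous $\mfg$-morphism on the quotient $E^n/\ker(d^n)$, which is Hausdorff since $\ker(d^n)$ is closed. The induced map $\bar d^n\colon E^n/\ker(d^n)\To E^{n+1}$ is an $\mfh$-strengthened injective $\mfg$-morphism — this is precisely what it means for $d^n$ to be $\mfh$-strengthened — so relative injectivity of $F^{n+1}$ extends the descended map to $u^{n+1}\colon E^{n+1}\To F^{n+1}$, and unwinding the factorization gives $u^{n+1}\circ d^n=\delta^n\circ u^n$.

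The homotopy statement is proved by the same device. Given a chain map $u^\bullet$ lifting the zero morphism, I would build continuous $\mfg$-linear maps $k^n\colon E^n\To F^{n-1}$ (with $F^{-1}:=F$ and $\delta^{-1}:=\eta$) satisfying $u^n=\delta^{n-1}\circ k^n+k^{n+1}\circ d^n$, proceeding by induction with $k^0:=0$. At each stage I set $w^n:=u^n-\delta^{n-1}\circ k^n$ and verify, using the chain-map identity $u^n\circ d^{n-1}=\delta^{n-1}\circ u^{n-1}$ together with the inductive relation for $k^{n-1},k^n$ and $\delta^{n-1}\circ\delta^{n-2}=0$, that $w^n$ vanishes on $\ker(d^n)=\im(d^{n-1})$. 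As before $w^n$ then descends to $E^n/\ker(d^n)$, and relative injectivity of $F^n$ applied along the $\mfh$-strengthened monomorphism $\bar d^n$ produces $k^{n+1}\colon E^{n+1}\To F^n$ with $k^{n+1}\circ d^n=w^n$, which rearranges to the desired homotopy identity.

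In both constructions the continuity and $\mfg$-linearity of every newly produced map are guaranteed automatically by the definition of relative injectivity, so no separate analytic estimates are needed. The part demanding the most attention is therefore not the algebra — which is the textbook comparison theorem — but the bookkeeping that keeps each step inside the category of topological $\mfg$-modules: checking that the kernels $\ker(d^n)$ are closed so the quotients are Hausdorff modules, that the maps factoring through these quotients stay continuous for the quotient topology, and above all that the induced monomorphisms $\bar d^n$ are genuinely $\mfh$-strengthened injective $\mfg$-morphisms, so that relative injectivity is applicable. This last point is exactly the content of the resolution \eqref{E-complex} being $\mfh$-strengthened, and it is where that hypothesis feeds into the argument.
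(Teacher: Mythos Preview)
Your proof is correct and follows essentially the same inductive comparison-theorem strategy as the paper's. The one minor organizational difference is that the paper explicitly uses the contracting homotopy maps $s^n$ to verify that the inclusion $\im(d^{n-1})\hookrightarrow E^n$ is $\mfh$-strengthened before applying relative injectivity, whereas you bypass this by working directly with the induced map $\bar d^n\colon E^n/\ker(d^n)\to E^{n+1}$ and citing the definition of $d^n$ being $\mfh$-strengthened; since $\im(d^{n-1})=\ker(d^n)$ by exactness, the two routes are equivalent, and your version is arguably cleaner in that it never needs the maps $s^n$ explicitly.
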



\begin{proof}
We first prove the existence of  $u^\bullet$.  Since $F^0 $ is injective and $\varps$ is strengthened the map $\eta\circ u: E\to F^0$ extends to a $\mfg$-morphism $u^0\colon E^0\to F^0$. Assume, inductively, that $u^0, \dots, u^{n-1}$ has been constructed, and  consider the diagram
\[
\xymatrix{
E^{n-2} \ar[r]^{d^{n-2}} \ar[d]^{u^{n-2}} & E^{n-1}\ar[r]^{d^{n-1}} \ar[d]^{u^{n-1}} & E^n \ar[r]^{d^n} & E^{n+1}\\
F^{n-2} \ar[r]^{\delta^{n-2}} & F^{n-1}\ar[r]^{\delta^{n-1}} & F^n \ar[r]^{\delta^n} & F^{n+1}\\
}
\]
By commutativity of the diagram, the composition $\delta^{n-1}\circ u^{n-1}\circ d^{n-2}$ is zero and thus $\delta^{n-1}\circ u^{n-1}$ vanishes on $\im(d^{n-2})=\ker(d^{n-1})$. It therefore induces a map 
\[
\overline{\delta^{n-1}\circ u^{n-1}}\colon E^{n-1}/\ker(d^{n-1})\To F^n. 
\]
Since $d^{n-1}$ is strengthened so is the induced map $\bar{d}_0^{n-1}\colon E^{n-1}/\ker(d^{n-1})\To \im(d^{n-1})$, which means that this map is invertible as a continuous morphism of $\mfg$-modules. Denote by $\sigma$ its inverse and by $r\colon \im(d^{n-1})\to F^n$ the composition $\overline{\delta^{n-1}\circ u^{n-1}}\circ \sigma$.
Consider now the natural decomposition of $d^{n-1}$ as
\[
E^{n-1}\overset{d^{n-1}_0}{\To} \im(d^{n-1})\overset{j}{\hookrightarrow} E^n,
\]
and put $t:=d^{n-1}_0\circ s^n\colon E^n\to \im(d^{n-1})$. Then we have
\[
t\circ j\circ d^{n-1}_0=d^{n-1}_0\circ s^n\circ j\circ d^{n-1}_0=d^{n-1}_0\circ s^n \circ d^{n-1}=d^{n-1}_0\circ (\id_{E^{n-1}}- d^{n-2}\circ s^{n-1})= d^{n-1}_0.
\]
This shows that the inclusion map $j$ is $\mfh$-strengthened with left inverse $t$. Thus, by $\mfh$-relative injectivity of $F^n$, there exists a $\mfg$-morphism $u^n\colon E^n\to F^n$ such that $u^n\circ j=r$ and this now does the job:
\begin{align*}
u^n \circ d^{n-1}(x)&=u^n\circ j \circ d^{n-1}_0(x)=r \circ d^{n-1}_0(x)=\overline{\delta^{n-1} \circ u^{n-1}}\circ\sigma(d^{n-1}_0(x)) =\\
&=\overline{\delta^{n-1} \circ u^{n-1}}\circ\sigma(\bar{d}^{n-1}_0(\bar{x}))=\overline{\delta^{n-1} \circ u^{n-1}}(\bar{x})=\delta^{n-1} \circ u^{n-1}(x).
\end{align*}
For the second claim, assume that $u^\bullet$ is a lift of the zero map $0\colon E\to F$. We need to produce a sequence of maps $t^n\colon E^n\to F^{n-1}$ such that
\[
t^1\circ d^0=u^0 \ \text{ and } \  t^{n+1}\circ d^n +\delta^{n-1} \circ t^n=u^n.
\]
Since $u^{\bullet}$ lifts the zero map,  $u^0\circ \varps= \eta\circ 0=0$ so  $u^0$ induces a morphism  $\bar{u}^0\colon E^0/\ker(d^0)\to F^0$. Since $d^0$ is assumed strengthened, we get a continuous, $\mfh$-linear  inverse map $\sigma\colon \rg(d^0)\to E^0/\ker(d^0)$ and because $F^0$ is assumed $\mfh$-relative injective we get a $\mfg$-morphism $t^1\colon E^1\to F^0$ fitting into the diagram
\[
\xymatrix{
0\ar[r] & \rg(d^0) \ar[d]_\sigma \ar[r]^{\subset} & E^1 \ar@{-->}[ddl]^{t^1}\\
  &E/\ker(d^0) \ar[d]_{\bar{u}^0} & \\
& F^0 &
} 
\]
(the fact that the inclusion is strengthened follows again since $\rg(d^0)=\ker(d^1)$ and $d^1$ is assumed to be strengthened). By construction 
\[
t^1\circ d^0=\bar{u}^0\circ \sigma\circ d^0 =u^0,
\]
as desired. We now build the map $t^2$, and from this it will be clear how to proceed inductively. We want a $\mfg$-morphism $t^2\colon E^2\to F^1$ such that $t^2 \circ d^1 + \delta^0\circ t^1=u^1$. Note that 
\[
(u^1-\delta^0\circ t^1)\circ d^0 = u^1\circ d^0-\delta^0 \circ  t^1 \circ d^0 =\delta^0\circ u^0-\delta^0\circ u^0=0,
\]
so that $u^1-\delta^0\circ t^1$ vanishes on $\rg(d^0)=\ker(d^1)$ and therefore induces a map
\[
\overline{u^1-\delta^0\circ t^1 }\colon E^1/\ker(d^1)\to F^1.
\]
Since $d^1$ is strengthened, we get an inverse morphism $\sigma\colon \rg(d^1)\to E^1/\ker(d^1)$ and since $F^1$ is assumed $\mfh$-relative injective we obtain a map $t^2\colon E^2\to F^1$ making the following diagram commutative:

\[
\xymatrix{
0\ar[r] & \rg(d^0) \ar[d]_\sigma \ar[r]^{\subset} & E^1 \ar@{-->}[ddl]^{t^2}\\
  &E/\ker(d^0) \ar[d]_{\overline{u^1-\delta^0\circ t^1 }} & \\
& F^0 &
} 
\]
(the fact that the inclusion is strengthened follows again since $\rg(d^0)=\ker(d^1)$ and $d^1$ is assumed to be strengthened). Then it follows that
\[
(t^2\circ d^1 +\delta^0 \circ t^1)(x)=\overline{u^1-\delta^0\circ t^1 }(\underbrace{\sigma(d^1(x))}_{=\bar{x}}) +\delta^0 \circ t^1(x) =(u^1-\delta^0 \circ t^1)(x)+\delta^0 t^1 (x)=u^1(x).
\]
This shows how $t^2$ is constructed from $t^1$ and one may now proceed like this by induction.
\end{proof}

\begin{cor}
Consider two strengthened, $\mfh$-relative injective resolutions of the topological $\mfg$-module $E$,
\begin{align*}
&0\To E\To E_1^0 \To E_1^1 \To E_1^2 \To\cdots\\
&0\To E\To \tilde{E}^0 \To \tilde{E}^1 \To \tilde{E}^2 \To\cdots,
\end{align*}
as well as the two induced complexes 
\begin{align*}
K&:0\To \left(E^0\right)^\mfg \To \left(E^1\right)^\mfg \To \left(E^2\right)^\mfg \To\cdots\\
\tilde{K}&:0\To \big(\tilde{E}^0\big)^\mfg \To \big(\tilde{E}^1\big)^\mfg \To \big(\tilde{E}^2\big)^\mfg \To\cdots
\end{align*}
Then
\begin{itemize}
\item[(i)]
The complexes $K$ and $\tilde{K}$ are homotopic through a homotopy consisting of continuous linear maps and hence their cohomology groups are isomorphic as (potentially non-Hausdorff) topological vector spaces.
\item[(ii)] If $u^\bullet , v^\bullet \colon E^\bullet\to \tilde{E}^\bullet$ are two continuous lifts of the identity map $\id\colon E\to E$ then they induce the same morphism $\Cohom^n(K)\to \Cohom^n(\tilde{K})$ for each $n\in \NN_0$.
\end{itemize}
\end{cor}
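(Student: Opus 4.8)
The plan is to deduce both statements formally from the fundamental extension result, Proposition \ref{prop-fundamental-extension-result}, together with one elementary observation: a continuous $\mfg$-linear map $t$ carries $\mfg$-invariants to $\mfg$-invariants, since for $x$ with $X.x=0$ for all $X$ one has $X.t(x)=t(X.x)=0$. The crucial point enabling the argument is that, in the situation of the corollary, \emph{each} of the two resolutions is simultaneously $\mfh$-strengthened (so it supplies the contracting homotopy that Proposition \ref{prop-fundamental-extension-result} requires of its source) and $\mfh$-relative injective (so it may serve as the injective target). Consequently Proposition \ref{prop-fundamental-extension-result} applies with either resolution playing either role.

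First I would establish part (i). Applying Proposition \ref{prop-fundamental-extension-result} to the source resolution $E_1^\bullet$, the injective complex $\tilde{E}^\bullet$, and the map $u=\id_E$, I obtain a continuous $\mfg$-linear lift $u^\bullet\colon E_1^\bullet\to \tilde{E}^\bullet$ of the identity; exchanging the roles of the two resolutions yields a lift $v^\bullet\colon \tilde{E}^\bullet\to E_1^\bullet$ in the opposite direction. Now $v^\bullet\circ u^\bullet$ and $\id_{E_1^\bullet}$ are both continuous lifts of $\id_E$, so their difference $v^\bullet\circ u^\bullet-\id_{E_1^\bullet}$ is a continuous lift of the zero morphism $E\to E$; the second half of Proposition \ref{prop-fundamental-extension-result} then supplies a homotopy through continuous $\mfg$-linear maps witnessing $v^\bullet\circ u^\bullet\simeq \id_{E_1^\bullet}$. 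Arguing symmetrically gives $u^\bullet\circ v^\bullet\simeq \id_{\tilde{E}^\bullet}$, so $u^\bullet$ and $v^\bullet$ are mutually inverse homotopy equivalences through continuous $\mfg$-linear maps.

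The remaining step is to restrict everything to $\mfg$-invariants. By the observation above, the chain maps $u^\bullet, v^\bullet$ and every homotopy operator restrict to continuous linear maps between the invariant complexes $K$ and $\tilde{K}$, and the homotopy identities survive the restriction. Hence $K$ and $\tilde{K}$ are chain homotopy equivalent through continuous linear maps, and the induced maps $\Cohom^n(K)\to \Cohom^n(\tilde{K})$ are mutually inverse bijections in each degree. Since each such map is induced by a continuous map of cochain spaces, it restricts continuously to $\ker(d^n\restriction)$ and descends continuously to the subquotient cohomology, and likewise for its inverse; thus the induced maps are topological isomorphisms of the (possibly non-Hausdorff) cohomology spaces, which is exactly assertion (i). Part (ii) is then immediate: if $u^\bullet$ and $v^\bullet$ both lift $\id_E$, then $u^\bullet-v^\bullet$ lifts the zero morphism, so by Proposition \ref{prop-fundamental-extension-result} it is chain homotopic to zero through continuous $\mfg$-linear maps; restricting to invariants and passing to cohomology shows that $u^\bullet$ and $v^\bullet$ induce the same morphism $\Cohom^n(K)\to \Cohom^n(\tilde{K})$.

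The genuine content has already been absorbed into Proposition \ref{prop-fundamental-extension-result}, so the expected obstacle here is not the construction of lifts or homotopies — which is purely formal once that proposition is available — but rather the bookkeeping that guarantees the homotopy equivalence \emph{descends} to the invariant complexes and that \emph{bicontinuity}, not merely linear bijectivity, is preserved on the subquotient. Both are handled precisely by the continuity and $\mfg$-linearity that Proposition \ref{prop-fundamental-extension-result} builds into every map it produces, so I would take care to state explicitly where each of these hypotheses is invoked.
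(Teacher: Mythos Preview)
Your proposal is correct and follows essentially the same route as the paper: apply Proposition \ref{prop-fundamental-extension-result} in both directions to obtain lifts of $\id_E$, use the second half of the proposition to show the compositions are homotopic to the identity via continuous $\mfg$-linear homotopies, restrict everything to $\mfg$-invariants, and conclude; part (ii) is handled identically. Your write-up is in fact a bit more explicit than the paper's about why $\mfg$-linear maps restrict to the invariant complexes and why bicontinuity survives passage to the subquotient, but the underlying argument is the same.
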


\begin{proof}
Applying Proposition \ref{prop-fundamental-extension-result} twice, we get morphisms of $\mfg$-complexes $u^\bullet\colon E^\bullet\to \tilde{E}^\bullet$ and $v^\bullet\colon \tilde{E}^\bullet \to E^\bullet$ lifting the identity map on $E$, which therefore induce continuous morphisms of complexes $u_0^\bullet\colon K\to \tilde{K}$ and $v_0^\bullet\colon \tilde{K}\to K $.
By construction, $u^\bullet\circ v^\bullet-\id_{E^\bullet}$ is a lift of the zero map so by  Proposition \ref{prop-fundamental-extension-result} 
this map is homotopic to zero through a homotopy of $\mfg$-morphisms, which therefore induces a homotopy (now consisting of continuous, linear maps) between  $u_0^\bullet\circ v_0^\bullet$ and $\id_{K}$. 
Similarly we get that $v_0^\bullet \circ u_0^\bullet$ is homotopic to $\id_{\tilde{K}}$ proving that $K$ and $\tilde{K}$ are homotopic as claimed. By standard homological algebra, homotopic maps induce the same morphism at the level of cohomology and thus $\Cohom^n(u^\bullet_0)\colon \Cohom^n(K)\to \Cohom^n(\tilde{K})$ and $\Cohom^n(v_0^\bullet)\colon \Cohom(\tilde{K})\to \Cohom(K)$ are continuous and each others inverses.
To see (ii),  take two lifts $u^\bullet ,v^\bullet$ of $\id_E$. Then $u^\bullet-v^\bullet$ is a lift of the zero map and therefore, by Proposition \ref{prop-fundamental-extension-result}, homotopic to zero through a homotopy of $\mfg$-morphisms. Thus $u_0^\bullet-v_0^\bullet \colon K\to \tilde{K}$ is homotopic to zero and  therefore induces the zero map at the level of cohomology. \end{proof}

\section{Smooth vector valued functions}\label{smooth-top-appendix}
In this section we collect the facts on smoothness of functions with values in Frechet spaces that are needed in the proof of Theorem \ref{top-van-est-thm}.
 We denote by $M$  an $n$-dimensional ($2$nd countable) real manifold and by $E$  a Fr{\'e}chet space with a separating family of seminorms $(p_n)_{n\in \NN}$.  
\subsection*{Smooth functions}
Given an open subset $U\subset \RR^n$ and a function $f\colon U\to E$, the usual definition of differentiability, and thus smoothness, of $f$ makes sense (see e.g.~\cite[III,8]{grothendieck-tvs} for more details). More generally, a function $f\colon M\to E$ is called \emph{smooth} at a point $m\in M$ if for some/any local chart $(U,\varphi)$ around $m$, the pull-back $f\circ\varphi^{-1}$ is smooth at $\varphi(m)\in \RR^n$. The set of smooth functions from $M$ to $E$ is denoted $C^\infty(M,E)$.
If $v\in T_mM$ is a tangent vector at $m$ it can be expressed in local coordinates around $m$, and it therefore makes sense to apply $v$ to $f\in C^\infty(M,E)$ and obtain an element $v(f)\in E$. More generally, if $X\in \Vect(M)$ is a smooth vector field on $M$, then $X$ defines an endomorphism $D_X$ of $C^\infty(M,E)$ by setting $D_X(f)(m):=X_m(f)$. Similarly, for any smooth function $f\in C^\infty(M)$ we get an endomorphism $M_f$ of $C^\infty(M,E)$ by setting $M_f(g)(m):= f(m)g(m)$. Denote by $\D$ the subalgebra of $\End(C^\infty(M,E))$ generated by the operators $D_X$ and $M_f$ with  $X\in$Vect$(M)$ and $f\in C^\infty(M)$; the algebra $\D$ is called the \emph{algebra of finite order differential operators}. Then for each compact subset $K\subset M$, each $n\in \NN$ and each $D\in \D$ we get a seminorm $p_{n,D,K}$ on $C^\infty(M,E)$ by setting
$
p_{n,D,K}(f):=\sup_{m\in K} p_n\left(Df(m)\right)
$
and this turns $C^\infty(M,E) $ into a Fr{\'e}chet space (see e.g.~\cite{warner-harmonic-analysis-I} for more details). The topology on $C^\infty(M,E)$ is referred to as the \emph{smooth topology}. \\

\subsection*{Smooth forms}
We will also need the space of vector-valued $p$-forms on $M$. An $E$-valued $p$-form on $M$ is a $C^\infty(M)$-multilinear and alternating map
\[
\omega\colon \underbrace{\Vect(M)\times \cdots \times \Vect(M)}_{\text{$p$ copies}} \To C^\infty(M,E),
\]
where $\Vect(M)$ denotes the $C^\infty(M)$-module of smooth vector fields on $M$. By standard arguments \cite[2.18]{warner}, the value $\omega(X_1,\dots, X_p)(m)$ only depends on $X_1(m),\dots, X_p(m)$ so that $\omega$ can also be thought of as a bundle $(\omega_m)_{m\in M}$ with
$
\omega_m\in \Hom(\wedge^p T_mM, E).
$

\subsection*{Smooth vectors}
If  $G$ is  Lie group with Lie algebra $\mfg$ and $E$ is a continuous $G$-module one defines  a vector $\xi\in E$ to be \emph{smooth} if the function $G\ni g\mapsto g.\xi\in E$ is in $C^\infty(G,E)$, and the set of smooth vectors is denoted $E^\infty$. This is, in general, not a closed subspace in the ambient Fr{\'e}chet space $E$, but it can be endowed with a finer topology for which it is a Fr{\'e}chet space. For this, consider the map
\[
E^\infty\ni \xi \overset{j}{\longmapsto} ( g\mapsto g.\xi)\in C^\infty(G,E).
\]
Then it is not difficult to see that the image of $E^\infty$ under this map is closed in the smooth topology on $C^\infty(G,E)$, and hence $E^\infty$ is a Fr{\'e}chet space when endowed with the subspace topology arising from the identification with a subspace of $C^\infty(M,E)$ via $j$.


\def\cprime{$'$} \def\cprime{$'$}

\end{document}